\documentclass[11pt,a4paper]{article}
\usepackage[utf8]{inputenc}

\usepackage{lmodern}
\usepackage[T1]{fontenc}
\usepackage[english]{babel}
\usepackage{ifpdf}
\usepackage[left=1in, right=1in, top=1in, bottom=1in]{geometry}
\usepackage[dvipsnames]{xcolor}
\usepackage[colorlinks=true,linkcolor=blue,citecolor=red]{hyperref}
\usepackage[square,numbers]{natbib}
\usepackage{subcaption}      
\usepackage[labelfont=bf]{caption}
\captionsetup[subfigure]{labelformat=simple, labelsep=none} 

\hypersetup{
	pdftitle={},
	pdfauthor={}
} 

\usepackage{graphicx, amsmath, amsthm, amssymb, enumitem, mathrsfs} 
\usepackage{booktabs} 

\usepackage[capitalize,nameinlink]{cleveref}
\usepackage{rotating}

\Crefname{figure}{Figure}{Figures}

\usepackage{soul}
\usepackage[linesnumbered,ruled]{algorithm2e}

\usepackage{tikz}
\usepackage{tikz-3dplot} 
\usepackage{pgfplots, pgfplotstable}
\usetikzlibrary{3d} 
\usetikzlibrary{arrows.meta}

\newcommand{\supp}{{\mathsf{supp }\hspace{.075cm}}}

\newcommand{\simple}[1]{\mathrm{\mathsf{simple}}({#1})}

\newtheorem{theorem}{Theorem}[section]

\newtheorem{remark}[theorem]{Remark}
\newtheorem{definition}[theorem]{Definition}
\newtheorem{lemma}[theorem]{Lemma}

\newtheorem{example}[theorem]{Example}

\usepackage{xcolor, soul}

\title{\vspace{-1.5cm}Stochastically Evolving Graphs\\via Edit Semigroups}
\author{Fan Chung \& Sawyer Jack Robertson}
\date{\today}

\begin{document}
\maketitle

\begin{abstract}
    We investigate a randomly evolving process of subgraphs in an underlying host graph using the spectral theory of semigroups related to the Tsetlin library and hyperplane arrangements. Starting with some initial subgraph, at each iteration, we apply a randomly selected edit to the current subgraph. Such edits vary in nature from \emph{simple} edits consisting of adding or deleting an edge, or \emph{compound} edits which can affect several edges at once. This evolving process generates a random walk on the set of all possible subgraphs of the host graph. We show that the eigenvalues of this random walk can be naturally indexed by subsets of edges of the host graph. We also provide, in the case of simple edits, a closed-form formula for the eigenvectors of the transition probability matrix and a sharp bound for the rate of convergence of this random walk. We consider extensions to the case of compound edits; examples of this model include the previously studied Moran forest model and a dynamic random intersection graph model. Evolving graphs arise in a variety of fields ranging from deep learning and graph neural networks to epidemic modeling and social networks. Our random evolving process serves as a general stochastic model for sampling random subgraphs from a given graph.
    
    \vspace{.25cm}
    {\footnotesize
    {\textbf{Keywords:}} random graphs, Markov chains, mixing times, left regular bands

    {\textbf{MSC2020:}} 05C80, 05C81, 60J10, 05C50
    }
\end{abstract}

\section{Introduction}\label{sec:introduction}

Many problems arising in dealing with large data and deep learning involve graphs that are dynamically evolving, with edges appearing and disappearing over time. Evolving graphs can be regarded as stochastic processes that have been extensively studied in the literature with a wide range of applications (see, e.g.,~\cite{anari2019log,cooper2007sampling}). In this paper we investigate a general edit-based model of stochastically evolving \emph{subgraphs} of a specified host graph. At each step of the process, an edit is randomly chosen and applied to the current set of edges.

We consider two types of edits. A \emph{simple edit} consists of adding or deleting a chosen edge, while a \emph{compound edit} involves adding or deleting multiple edges at once. Our graph evolving process satisfies the following memoryless property: namely, that each chosen edit is carried out regardless of the previous edits or existing graph. For example, a simple edit such as ``add edge $e$'' is applied regardless of whether $e$ belongs to the current subgraph, and has no effect if $e$ is already present. By treating edits and their products as elements of a semigroup, we will demonstrate that the edit semigroup forms a so-called \emph{left regular band}. Namely, for any two elements $x, y$ belonging to the edit semigroup, we have $x^2=x$ and the following memorylessness identity holds:
    \begin{align}\label{eq:memorylessness}
        xyx = xy.
    \end{align}
We then apply methods from semigroup spectral theory to the evolving graph process and derive closed-form expressions for the eigenvalues, stationary distribution, and eigenvectors of the transition probability matrix, and sharp mixing time bounds.

We remark that different choices of edits give rise to distinct graph evolving processes, which in turn have their own unique stationary distributions. In particular, we show that the stationary distribution of a certain graph edit Markov chain obtained from simple edits is equal in distribution to a corresponding edge-independent random graph model, including such examples as Erd\H{o}s-R\'{e}nyi graphs, power law graphs, and stochastic block models (see, e.g.,~\cref{ex:stationary-er} to~\cref{ex:stationary-cl}). When the Markov chain is obtained from compound edits, the stationary distribution is more complex. We consider two examples of our Markov chain obtained from applying compound edits: the first version has a stationary distribution known as the \emph{Moran forest model}, which is a random forest model that has applications in computational biology~\cite{bienvenuMoranForest2021}; and the second has a stationary distribution which corresponds to the random intersection graph model investigated by Godehardt and Jaworski~\cite{godehardt2001two}. 

Our graph evolving sequence can be regarded as a random walk on an associated directed state graph. Each node of the state graph corresponds to a subgraph of the host graph and the links of the state graph are defined and weighted by the graph evolving process of adding and deleting edges in the host graph. Two examples of state graphs formed by applying simple and compound edits are illustrated in~\cref{fig:state-graph-5} and~\cref{fig:state-graph-moran}, respectively. In spite of the enormous size of the state graph (to $2^m$ nodes where $m$ denotes the number of edges in the host graph), our methods facilitate a detailed investigation of the spectral properties of the state graph and the corresponding transition probability matrix.


Spectral analysis of random walks on semigroups dates back to Bidigare, Hanlon, and Rockmore~\cite{bidigare1999combinatorial}, who derived a closed-form expression for the eigenvalues of the transition probability matrix of a Markov chain based on book shuffling (Tsetlin library). Brown and Diaconis later obtained convergence bounds for random walks on hyperplane arrangements~\cite{brown1998random}. Brown gave a detailed treatment by exploring connections between several combinatorial structures and the eigenvalues of associated random walks~\cite{brownSemigroupsRingsMarkov2000}. Saliola later analyzed eigenvectors of transition probability matrices for these walks~\cite{saliola2012eigenvectors}. The first author and Graham used these methods for edge flipping games and voter models~\cite{chungEdgeFlippingGraphs2012} (see also the further work~\cite{butlerEdgeFlippingComplete2015,demirci2023mixing,chung2012hypergraph}).

Stochastically evolving graph models have been studied in a variety of settings; examples include edge-switching Markov chains for sampling degree-regular graphs~\cite{cooper2007sampling}, up-down walks on simplicial complexes for sampling spanning trees and matroid bases~\cite{anari2019log}, or edge-Markovian dynamic graphs~\cite{clementi2008flooding, akrida2020fast}. Various temporal graph models have appeared in a variety of applied settings, including large person-to-person communication networks~\cite{sekaraFundamentalStructuresDynamic2016}, interaction networks of molecular systems in biology~\cite{przytyckaDynamicInteractomeIts2010}, disease and epidemic models~\cite{pareEpidemicProcessesTimeVarying2018}, and phylogenetic trees~\cite{bienvenuMoranForest2021, durrettProbabilityModelsDNA2002}, among others~\cite{michailIntroductionTemporalGraphs2016, aggarwalEvolutionaryNetworkAnalysis2014, holmeTemporalNetworks2012}. Moreover, graph neural network models have been extensively studied to predict future behavior in these systems~\cite{wu2020evonet}, with applications in areas such as automatic robot design~\cite{wang2018neural}, time series forecasting~\cite{spadon2021pay}, and object detection~\cite{xu2022novel}, among others~\cite{skardingFoundationsModelingDynamic2021a}.

\begin{figure}[t!]
    \centering
    \includegraphics[width=0.45\linewidth]{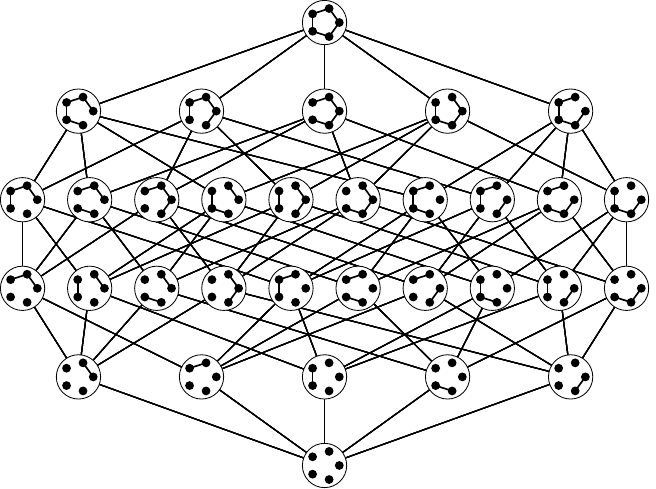}
    \caption{This state graph is associated with the simple edit process with the host graph corresponding to a cycle on $5$ edges. Edges are directed with possibly different weights; self-loops are not shown.}\label{fig:state-graph-5}
\end{figure}
\begin{figure}[t!]
    \centering
    \includegraphics[width=.45\linewidth]{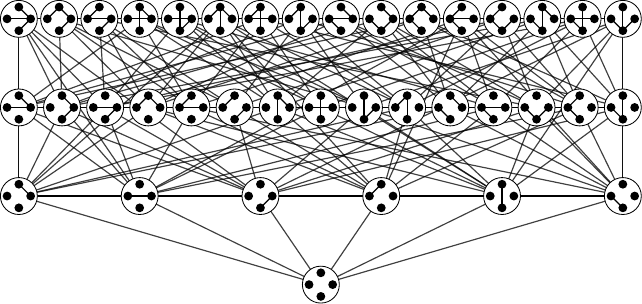}
    \caption{This state graph is associated with the Moran forest process with the host graph $\mathcal{H} = K_4$. Edges are directed with possibly different weights; self-loops are not shown.}\label{fig:state-graph-moran}
\end{figure}

Before we state several main results, we need some definitions. For a host graph $\mathcal{H}=(\mathcal{V},\mathcal{E})$ with a finite vertex set $\mathcal{V}$ and edge set $\mathcal{E}\subseteq \binom{\mathcal{V}}{2}$, a \emph{graph edit} is an idempotent map $x:2^{\mathcal{E}}\to 2^{\mathcal{E}}$ (i.e., $x^2=x$). A \emph{simple edit} adds or deletes a single edge.


\begin{definition}[Simple edit process]\label{defn:simple-edit-process}
    Given $0<p_e<1$ for each $e\in\mathcal{E}$, the \emph{simple edit process with edge probabilities $(p_e)_{e\in\mathcal{E}}$} is defined as follows. Starting from $G_0=(\mathcal{V},E_0)$, for each $t\ge 1$ choose $e\in\mathcal{E}$ uniformly at random and set 
        \begin{align*}
            E_t=
                \begin{cases}
                E_{t-1}\cup\{e\}, & \text{with probability } p_e,\\
                E_{t-1}\setminus\{e\}, & \text{with probability } 1-p_e.
                \end{cases}
        \end{align*}
    with $G_t = (\mathcal{V}, E_t)$ for $t\geq 0$.
\end{definition}

A simulation of a simple edit process appears in~\cref{fig:diffusion-on-graph}.

\begin{theorem}[Eigenvalues for the simple edit process]\label{thm:intro-eigenvalues}
    Consider the simple edit process $(G_t)_{t\geq 0}$ defined on a host graph $\mathcal{H}=(\mathcal{V}, \mathcal{E})$ with edge probabilities $(p_e)_{e\in\mathcal{E}}$. Then the transition probability matrix $\mathsf{P}$ for the associated Markov chain is diagonalizable and has eigenvalues $\lambda_T$ indexed by subsets $T\subseteq \mathcal{E}$, each occurring with multiplicity one, and which have the following form:
        \begin{align*}
            \lambda_T = \frac{|T|}{|\mathcal{E}|}
        \end{align*}
    where $|\mathcal{E}|$ is the cardinality of $\mathcal{E}$, and similarly for $T$. Equivalently, $\mathsf{P}$ has an eigenvalue $\frac{k}{|\mathcal{E}|}$ for each $0\leq k\leq |\mathcal{E}|$ with multiplicity ${|\mathcal{E}|\choose k}$. 
\end{theorem}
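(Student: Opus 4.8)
The plan is to recognize the simple edit process as a random walk driven by a measure on a left regular band (LRB) of graph edits, and then invoke the Bidigare--Hanlon--Rockmore / Brown eigenvalue theorem for such walks. First I would make the semigroup explicit: for each edge $e\in\mathcal{E}$, let $a_e^+$ denote the edit ``add $e$'' and $a_e^-$ denote ``delete $e$'', acting on $2^{\mathcal{E}}$ by $a_e^+(E)=E\cup\{e\}$ and $a_e^-(E)=E\setminus\{e\}$. One checks directly that each such map is idempotent and that products of these generators satisfy $x^2=x$ and $xyx=xy$; hence the semigroup $\mathcal{S}$ they generate is a left regular band. In fact $\mathcal{S}$ is (isomorphic to) the face semigroup of the Boolean arrangement on $\mathcal{E}$: an element of $\mathcal{S}$ is determined by the set of edges it "touches" together with the chosen sign on each, i.e.\ it can be encoded as a partial function $\mathcal{E}\to\{+,-\}$, with product given by "overwrite on the left." The step distribution is the probability measure $w$ on the generators assigning mass $p_e/|\mathcal{E}|$ to $a_e^+$ and $(1-p_e)/|\mathcal{E}|$ to $a_e^-$.

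Next I would apply the general spectral theorem for random walks on LRBs (Brown, \cite{brownSemigroupsRingsMarkov2000}; originally \cite{bidigare1999combinatorial} for the Tsetlin library): the transition matrix $\mathsf{P}$ of the walk on the chambers driven by $w$ is diagonalizable, and its eigenvalues are indexed by the elements $X$ of the support lattice $\mathcal{L}$ (the lattice of flats of the arrangement, equivalently $\mathcal{L}\cong \{\text{subsets of }\mathcal{E}\}$ here, where a flat corresponds to the set of edges left "unconstrained"), with
\[
    \lambda_X \;=\; \sum_{\,y\,:\,\mathrm{supp}(y)\le X} w(y),
\]
and the multiplicity of $\lambda_X$ equal to a Möbius-type number (the number of chambers of the restricted arrangement, which for the Boolean arrangement is just $1$ in each rank-complementary piece, giving the stated binomial multiplicities after collecting equal values). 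Concretely, for the Boolean arrangement on $\mathcal{E}$ I would index flats by the set $T\subseteq\mathcal{E}$ of "free" coordinates; then a generator $a_e^{\pm}$ has support $\le T$ iff $e\notin T$, so $\lambda_T = \sum_{e\notin T} \big(p_e/|\mathcal{E}| + (1-p_e)/|\mathcal{E}|\big) = |\mathcal{E}\setminus T|/|\mathcal{E}|$. Re-indexing $T' = \mathcal{E}\setminus T$ gives $\lambda_{T'} = |T'|/|\mathcal{E}|$ exactly as claimed, each with multiplicity one (one chamber per flat in the Boolean case), and collecting by value yields eigenvalue $k/|\mathcal{E}|$ with multiplicity $\binom{|\mathcal{E}|}{k}$. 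Diagonalizability is part of the cited theorem since the $\lambda_X$ are the only eigenvalues and their multiplicities sum to $2^{|\mathcal{E}|}=|\text{chambers}|$.

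The main obstacle is not the eigenvalue computation itself (which is a short sum once the framework is in place) but rather setting up the dictionary cleanly: I must verify that the semigroup generated by the add/delete edits is genuinely the face semigroup of the Boolean arrangement (or at least an LRB with the right support lattice), identify the "chambers" with the $2^{|\mathcal{E}|}$ states $2^{\mathcal{E}}$, and confirm that the walk defined in \cref{defn:simple-edit-process} is exactly the $w$-driven hyperplane walk (each step left-multiplies the current chamber by a random generator). A subtle point to check is that every state $E\in 2^{\mathcal{E}}$ is reachable and that the support condition "$w$ charges enough faces" holds so that the walk is ergodic with a unique stationary distribution — but for eigenvalue purposes only the support-lattice sum matters, and the Boolean arrangement makes that sum transparent. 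I would also double-check the edge case $k=|\mathcal{E}|$ ($\lambda=1$, multiplicity $1$, the stationary eigenvalue) and $k=0$ ($\lambda=0$), which are consistent with the formula.
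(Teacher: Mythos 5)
Your proposal is correct and takes essentially the same route as the paper: identify the edits as generators of a left regular band (the face semigroup of the Boolean arrangement on $\mathcal{E}$, with support lattice the Boolean lattice of subsets of $\mathcal{E}$), apply Brown's eigenvalue theorem to the chamber walk to get $\lambda_T=\sum_{e\in T}(w_{e^+}+w_{e^-})=|T|/|\mathcal{E}|$ (your complementary indexing of flats is harmless after re-indexing), and deduce multiplicity one per flat. The only imprecision is your parenthetical describing the multiplicity as ``the number of chambers of the restricted arrangement'': the multiplicities actually come from M\"obius inversion of the chamber counts $c_X=2^{m-|X|}$ over the Boolean lattice (which the paper carries out by a downward induction), but this yields exactly the multiplicity one per flat that you assert, so the conclusion stands.
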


\begin{figure}[b!]
    \centering
    \includegraphics[width=0.75\linewidth]{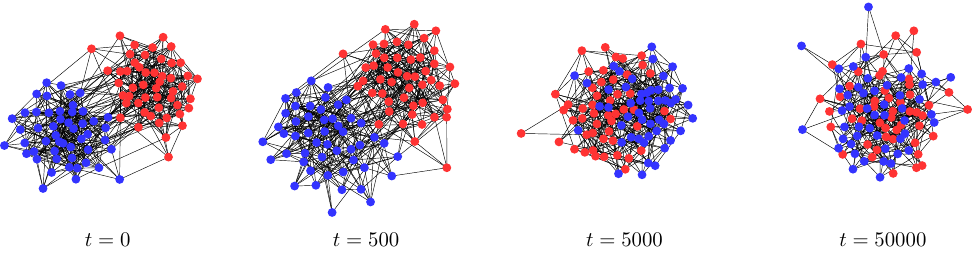}
    \caption{A simple graph edit process simulation on $n=100$ nodes and host graph $\mathcal{H} = K_{100}$ with edge probabilities $p_e = 0.075$ for each $e\in \mathcal{E}$. The initial state originates from a planted bisection model with nodes colored red and blue accordingly (see~\cref{ex:stationary-er} and~\cref{ex:stationary-pb}).}
    \label{fig:diffusion-on-graph}
\end{figure}

\cref{thm:intro-eigenvalues} is proved in~\cref{sec:walks-on-chambers}.
We also give a sharp bound for the mixing time of this process.

\begin{theorem}[Mixing time for the simple edit process]\label{thm:intro-mixing-time}
    Instate the setup of~\cref{thm:intro-eigenvalues}. Let $m$ denote the number of edges of $\mathcal{H}$ and let $\pi$ denote the stationary distribution of the Markov chain $(G_t)_{t\geq 0}$. Then, the total variation distance of the transition probability matrix $\mathsf{P}$ satisfies
        \begin{align}\label{eq:mixing-time-bound-intro}
            \|\mathsf{P}^t(E_0, \cdot) - \pi\|_{TV}\leq e^{-c}
        \end{align}
    provided $t\geq m(c+2\log{m})$.
\end{theorem}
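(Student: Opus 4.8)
The plan is to produce a \emph{strong stationary time} for the chain and control its tail by a coupon--collector estimate; this is the route taken by Brown and Diaconis~\cite{brown1998random} for Tsetlin--library-type walks, and it is what yields the clean $m\log m$ scaling (an $\ell^2$/eigenvalue bound, by contrast, would introduce a $\log(1/\pi_{\min})$ factor depending on the $p_e$ rather than on $m$ alone). First I would record the stationary distribution. Realize one step of the process by sampling $J_t\in\mathcal E$ uniformly and an independent $U_t\sim\mathrm{Uniform}[0,1]$, and resetting the status of edge $J_t$ to ``present'' iff $U_t\le p_{J_t}$; by the memorylessness identity~\eqref{eq:memorylessness} this reset ignores the current configuration, detailed balance is immediate, and $\pi$ is the product measure $\pi(E)=\prod_{e\in E}p_e\prod_{e\notin E}(1-p_e)$.

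Next I would take $\tau$ to be the first time every edge of $\mathcal E$ has been selected, i.e.\ $\tau=\min\{t:\{J_1,\dots,J_t\}=\mathcal E\}$, which is measurable with respect to the selection sequence $(J_s)$ alone. The claim is that $\tau$ is a strong stationary time. For any $t$, conditioning on $(J_s)_{s\le t}$, the status of each edge $e$ is determined by the coin $U_s$ at its most recent selection $s\le t$ (or by $E_0$ if $e$ was never selected); when $\tau\le t$ every edge has a most recent selection, these selection times are pairwise distinct, so the relevant coins are distinct i.i.d.\ uniforms, and hence $E_t$ is distributed as $\pi$ conditionally on $\{\tau=s\}$ for every $s\le t$, with $\tau$ independent of $E_\tau$. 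Granting this, the standard strong-stationary-time bound gives
\begin{align*}
    \|\mathsf P^t(E_0,\cdot)-\pi\|_{TV}\le\pr{\tau>t}.
\end{align*}

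Finally, $\pr{\tau>t}\le\sum_{e\in\mathcal E}\pr{J_1\ne e,\dots,J_t\ne e}=m\bigl(1-\tfrac1m\bigr)^t\le m\,e^{-t/m}$, so for $t\ge m(c+2\log m)$ we get $\pr{\tau>t}\le m\,e^{-c-2\log m}=m^{-1}e^{-c}\le e^{-c}$, as claimed. (Even $t\ge m(c+\log m)$ suffices; the stated bound keeps a clean margin.)

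I expect the only delicate point to be the strong-stationary-time verification: one must argue carefully that conditioning on the selection history --- and therefore on $\tau$ --- leaves the configuration in the product distribution, which is precisely where the left-regular-band / memorylessness structure of simple edits is used. After that the argument is the elementary coupon collector bound. It is also worth remarking that the $m\log m$ order is sharp: a matching coupon-collector lower bound shows that $\Theta(m\log m)$ steps are genuinely necessary, since an edge that has not yet been touched cannot have reached its stationary status, so the bound cannot be improved in order. Only the upper bound, however, is needed for the statement.
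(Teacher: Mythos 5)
Your proof is correct, and it takes a genuinely different route from the paper. The paper deduces the theorem from Brown's convergence bound for left-regular-band walks (\cref{thm:convergence-brown}), which together with the eigenvalue computation of \cref{thm:eigenvalues-state-graph} gives $\|\mathsf{P}^t(E_0,\cdot)-\pi\|_{TV}\le\sum_{k=1}^{m-1}\binom{m}{k}(k/m)^t$; the work then lies in a ratio-of-consecutive-terms estimate showing this sum is at most $2m(1-1/m)^t$ once $t\ge 2m\log m$. You instead construct the strong stationary time $\tau=\min\{t:\{J_1,\dots,J_t\}=\mathcal{E}\}$ and apply the coupon-collector union bound; your verification that $\tau$ is strong stationary is sound (the key points being that the status of each edge is determined by the coin at its most recent selection, and that these selection times are distinct, so the relevant coins are independent), and it yields the marginally sharper threshold $t\ge m(c+\log m)$ together with a matching lower bound on the order. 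The two approaches are cousins --- Brown and Diaconis~\cite{brown1998random} originally obtained the eigenvalue bound from exactly this kind of stopping-time argument, and the union bound $m(1-1/m)^t$ is the leading term of the inclusion-exclusion expansion of $\pr{\tau>t}$, of which the paper's eigenvalue sum is a term-by-term majorant. What the paper's route buys is uniformity with the rest of its semigroup machinery: the same argument transfers verbatim to compound edit processes (\cref{thm:mixing-time-subsemigroup}), where a compound edit does not independently resample its affected edges and no comparably clean strong stationary time is available. What your route buys is a self-contained elementary proof, a slightly better constant, and the sharpness remark. One small quibble: your aside that an eigenvalue approach ``would introduce a $\log(1/\pi_{\min})$ factor'' applies to the generic reversible-chain $\ell^2$ bound but not to Brown's LRB-specific bound, which is the one the paper actually uses and which carries no dependence on $\pi$.
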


\cref{thm:intro-mixing-time} is proved in~\cref{sec:walks-on-chambers}. We note that the bound in~\cref{eq:mixing-time-bound-intro} is stronger than some well-known spectral mixing time estimates for irreducible and reversible Markov chains (see, e.g.,~\cite{levin2017markov}), which often depend on the stationary distribution $\pi$.

We also obtain analogous results for compound edit processes, defined below. If $x$ is any graph edit, its \emph{support} $\supp(x)\subseteq\mathcal{E}$ is the subset of edges in the host graph which are edited by $x$ (see~\cref{rmk:quotient-map}).

\begin{definition}[Compound edit process]\label{defn:compound-edit-process}
    Suppose $\mathsf{A}$ is a given set of graph edits and $w\in\mathbb{R}^{\mathsf{A}}$ is a probability distribution on $\mathsf{A}$. Starting with an initial subgraph $G_0 = (\mathcal{V}, E_0)$, for each $t\geq 1$, sample an edit $x\in\mathsf{A}$ according to the distribution $w$ and apply $x$ to the graph $G_{t-1} = (\mathcal{V}, E_{t-1})$. The resulting sequence of graphs $(G_t)_{t\geq 0}$ is said to be a \emph{compound edit process obtained from $\mathsf{A}$ with distribution $w$}. 
\end{definition}

\begin{theorem}[Eigenvalues for compound edit processes]\label{thm:eigenvalues-subsemigroup-intro}
    Let $\mathcal{H}=(\mathcal{V}, \mathcal{E})$ be a fixed host graph, let $\mathsf{A}\subseteq\mathcal{S}$ be a given set of edits, and let $w\in\mathbb{R}^{\mathsf{A}}$ be a given probability distribution on $\mathsf{A}$. Let $\mathsf{P}$ denote the transition probability matrix of the compound edit process obtained from $\mathsf{A}$ with distribution $w$. Then $\mathsf{P}$ is diagonalizable and has eigenvalues indexed by those subsets $X\subseteq\mathcal{E}$ which are formed as unions of sets from $\{\supp(x)\}_{x\in\mathsf{A}}$, together with the empty set, with corresponding eigenvalue
        \begin{align*}
            \lambda_X = \sum_{\substack{x\in\mathsf{A} \\ \supp(x) \subseteq X}} w_x.
        \end{align*}
     The multiplicity of $\lambda_X$ depends on the choice of $\mathsf{A}$ and can be computed from the M\"obius function of the join semilattice generated by $\{\supp(x)\}_{x\in\mathsf{A}}$ and set union (see~\cite[Thm. 1]{brownSemigroupsRingsMarkov2000} and~\cite[Sec. 3.7]{stanley2011enumerative}).
\end{theorem}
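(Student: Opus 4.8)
The plan is to recover the spectrum of $\mathsf{P}$ from the representation theory of the left regular band generated by $\mathsf{A}$, specializing the analysis of Bidigare--Hanlon--Rockmore, Brown--Diaconis, and Brown~\cite{bidigare1999combinatorial,brown1998random,brownSemigroupsRingsMarkov2000}. First I would record the relevant structure: the edit semigroup $\mathcal{S}$ is a finite left regular band acting on the set $\mathcal{C}=2^{\mathcal{E}}$ of subgraphs, which is its set of chambers, and $\supp\colon\mathcal{S}\to(2^{\mathcal{E}},\cup)$ is its maximal semilattice quotient, so that $\supp(xy)=\supp(x)\cup\supp(y)$. Passing to the sub-band $\mathsf{B}=\langle\mathsf{A}\rangle$ (adjoining an identity if necessary, whose support is $\varnothing$), the image $L:=\supp(\mathsf{B})$ is the closure of $\{\supp(x)\}_{x\in\mathsf{A}}$ under union --- a finite lattice ordered by inclusion, with join $=$ union, which in particular contains $\varnothing$ as its bottom and $\bigcup_{x\in\mathsf{A}}\supp(x)$ as its top. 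Verifying that this concretely described $L$ is exactly the (intrinsic) maximal semilattice image of $\mathsf{B}$, using the explicit form of the elements of $\mathsf{B}$ (cf.\ the support map), is the first step. Finally, one step of the compound edit process applies a random $x\sim w$, so, writing $\omega:=\sum_{x\in\mathsf{A}}w_x\,x$ in the semigroup algebra $\mathbb{C}\mathsf{B}$, the matrix $\mathsf{P}$ is the matrix of the action of $\omega$ on the chamber module $\mathbb{C}\mathcal{C}$ in the chamber basis.

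The eigenvalues then drop out of the fact that $\mathsf{B}$ is a left regular band, so its algebra has commutative semisimple quotient $\mathbb{C}\mathsf{B}/\operatorname{rad}(\mathbb{C}\mathsf{B})\cong\mathbb{C}L$; explicitly, the simple $\mathbb{C}\mathsf{B}$-modules are the one-dimensional modules $\mathbb{C}_X$, $X\in L$, on which $b\in\mathsf{B}$ acts by the scalar $\mathbf{1}[\supp(b)\subseteq X]$. Hence $\omega$ acts on $\mathbb{C}_X$ by $\sum_{x:\,\supp(x)\subseteq X}w_x=\lambda_X$, and since every composition factor of $\mathbb{C}\mathcal{C}$ is one of the $\mathbb{C}_X$, the eigenvalues of $\mathsf{P}$ are contained in $\{\lambda_X:X\in L\}$, the eigenvalue $\lambda$ occurring with algebraic multiplicity $\sum_{X:\,\lambda_X=\lambda}m_X$, where $m_X$ is the composition multiplicity of $\mathbb{C}_X$ in $\mathbb{C}\mathcal{C}$ (so $\sum_{X\in L}m_X=|\mathcal{C}|$). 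I would then obtain each $m_X$ by M\"obius inversion over $L$ from the chamber counts of $\mathsf{B}$, as in~\cite[Thm.~1]{brownSemigroupsRingsMarkov2000} and~\cite[Sec.~3.7]{stanley2011enumerative}; for the simple edit process $L=2^{\mathcal{E}}$ and the computation gives $m_T=1$ for every $T\subseteq\mathcal{E}$, so the eigenvalue $k/|\mathcal{E}|$ has multiplicity $\binom{|\mathcal{E}|}{k}$, in agreement with~\cref{thm:intro-eigenvalues}.

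The main obstacle is upgrading ``the eigenvalues lie in $\{\lambda_X\}$'' to the assertion that $\mathsf{P}$ is actually \emph{diagonalizable} with precisely these eigenvalues: because $\mathbb{C}\mathsf{B}$ is not semisimple, a priori $\omega$ could act on $\mathbb{C}\mathcal{C}$ with a nonzero nilpotent part along the composition series. Ruling this out is where one genuinely uses the left regular band identity $xyx=xy$, not merely idempotency $x^2=x$; I would argue that the support filtration of $\mathbb{C}\mathcal{C}$ splits as an $\omega$-module into $\bigoplus_{X\in L}\mathbb{C}_X^{\oplus m_X}$, either via the recursively constructed eigenvectors of Brown--Diaconis or via the idempotent decomposition of the chamber module due to Saliola~\cite{saliola2012eigenvectors}. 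A secondary, purely bookkeeping, obstacle is to fix conventions carefully --- that the M\"obius function in the multiplicity formula is that of $(L,\subseteq)$, that the top of $L$ supplies the Perron eigenvalue $\lambda=1$, and that the bottom $\varnothing$ supplies $\lambda_\varnothing=\sum_{x:\,\supp(x)=\varnothing}w_x$, which is $0$ unless some edit in $\mathsf{A}$ is trivial.
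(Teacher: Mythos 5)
Your proposal is correct and follows essentially the same route as the paper: the paper identifies the join semilattice of $\langle\mathsf{A}\rangle$ as the union-closure of $\{\supp(x)\}_{x\in\mathsf{A}}$ (its Lemma~\ref{lem:subsemigroup}) and then invokes Brown's Theorem~1 as a black box for diagonalizability, the eigenvalue formula $\lambda_X=\sum_{\supp(x)\subseteq X}w_x$, and the M\"obius-function multiplicities. The only difference is that you unpack the internals of that citation (the semigroup algebra, its semisimple quotient $\mathbb{C}L$, and the splitting of the support filtration), which the paper leaves to the reference.
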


This leads to an estimate on the mixing time of a compound edit process as follows.

\begin{theorem}[Mixing time for compound edit processes]\label{thm:mixing-time-subsemigroup-intro}
    Instate the setup of~\cref{thm:eigenvalues-subsemigroup-intro}. Let $0\leq \lambda_\ast < 1$ denote the greatest eigenvalue of $\mathsf{P}$ other than $\lambda = 1$. Then for any initial state $G_0 = (\mathcal{V}, E_0)$, we have
        \begin{align}\label{eq:mixing-time-bound-compound}
            \|\mathsf{P}^t(x_0, \cdot) - \pi\|_{TV}\leq e^{-c}
        \end{align}
    provided $t\geq \frac{m\log{2} + c}{1-\lambda_\ast}$.
\end{theorem}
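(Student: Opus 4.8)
The strategy is to bound the total variation distance by the spectral $\ell^2$-type quantity and then control that quantity using the eigenvalue structure already established. Let me think about what's available: Theorem on eigenvalues for compound edit processes gives eigenvalues $\lambda_X$ indexed by certain subsets $X$, with $\lambda = 1$ corresponding to $X = \mathcal{E}$ (the full union), and $\lambda_\ast < 1$ being the second-largest. The chain is not reversible in general, so I can't use the clean reversible spectral bound directly. But the semigroup/left-regular-band structure gives more: the transition matrix is triangularizable with respect to the face order, and Brown–Diaconis give a bound on TV distance of the form $\|\mathsf{P}^t(x_0,\cdot) - \pi\|_{TV} \le \sum$ over "non-identity" eigenvalue contributions, or more precisely a bound like $\sum_{X \ne \mathcal{E}} (\text{mult}_X)\,\lambda_X^t$, but even cruder: one can bound by $(\text{number of relevant subsets})\cdot \lambda_\ast^t$ or by $2^m \lambda_\ast^t$ since there are at most $2^m$ subsets $X$.

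Here is the plan in steps. First, I would invoke the Brown–Diaconis mixing bound for random walks driven by a probability measure on a left regular band (this is the compound-edit analogue of what is used for Theorem on simple-edit mixing). That bound states that for any starting chamber $x_0$, $\|\mathsf{P}^t(x_0,\cdot)-\pi\|_{TV} \le \frac{1}{2}\sum_{X} d_X \lambda_X^t$, where the sum runs over the subsets $X \subsetneq \mathcal{E}$ indexing the non-unit eigenvalues and $d_X$ is the corresponding multiplicity (alternatively, a bound without multiplicities that just counts faces). Second, I would crudely bound every $\lambda_X \le \lambda_\ast$ for $X \subsetneq \mathcal{E}$ and bound $\sum_X d_X$ (or the number of relevant subsets) by $2^m$, the total number of subsets of $\mathcal{E}$; this gives $\|\mathsf{P}^t(x_0,\cdot)-\pi\|_{TV} \le 2^{m}\lambda_\ast^t$ up to the factor $\tfrac12$, which I'll absorb. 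Third, I would use $\lambda_\ast \le e^{-(1-\lambda_\ast)}$ to get $2^m \lambda_\ast^t \le 2^m e^{-(1-\lambda_\ast)t} = e^{m\log 2 - (1-\lambda_\ast)t}$. Fourth, setting $t \ge \frac{m\log 2 + c}{1-\lambda_\ast}$ makes the exponent $\le -c$, yielding the claimed bound $e^{-c}$.

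The main obstacle — or rather the main thing to get right — is justifying the initial spectral bound on TV distance in the non-reversible setting. The chain here is generally not reversible, so the usual symmetric spectral gap argument does not apply; instead one must use the precise structure of random walks on left regular bands, where the transition operator, though not self-adjoint, is still diagonalizable with real nonnegative eigenvalues and admits an explicit description of its action via the semigroup algebra. I would cite the Brown–Diaconis framework (as the paper already does for the simple-edit case) to get a bound of the form $\|\mathsf{P}^t(x_0,\cdot)-\pi\|_{TV} \le \sum_{X \subsetneq \mathcal{E}} (\text{something counting dimension}) \cdot \lambda_X^t$; the cleanest version bounds the number of contributing terms by the number of chambers or by $2^m$. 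I should double-check whether the correct combinatorial prefactor is $2^m$ or something smaller like the number of faces of the associated arrangement, but since we only need an upper bound and $2^m$ suffices to produce the stated $m\log 2$ term, the crude count is adequate. The remaining steps — the inequality $\lambda_\ast \le e^{-(1-\lambda_\ast)}$ and the arithmetic of choosing $t$ — are routine.
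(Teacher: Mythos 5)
Your proposal follows essentially the same route as the paper: invoke Brown's total variation bound $\|\mathsf{P}^t(x_0,\cdot)-\pi\|_{TV}\leq\sum_{X\in L^\ast}m_X\lambda_X^t$, bound each $\lambda_X$ by $\lambda_\ast$ and the total multiplicity by the number of chambers $\leq 2^m$, and finish with $\log\lambda_\ast\leq-(1-\lambda_\ast)$. The paper resolves your one point of hesitation exactly as you suspected: the prefactor is controlled by noting $\sum_X m_X$ is at most the chamber count of the subsemigroup, hence at most $2^m$.
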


This paper is organized as follows.~\cref{sec:basic-properties} contains notation, basic definitions, and the derivation of the stationary distribution for the evolving graph process using simple edits. In~\cref{sec:lrbs-edit-semigroup} we briefly mention results in the theory of left regular bands and establish properties of our graph edit semigroup. In~\cref{sec:walks-on-chambers}, we associate the simple edit process to random walks on the chambers of the semigroup, and characterize the eigenvalues of its transition probability matrix. This allows us to give bounds on the mixing time of the process. In~\cref{sec:application-subsemigroups} we consider extensions of this setup to evolving graph processes based on compound edits and investigate two examples of compound edit processes: the Moran forest model and a dynamic random intersection graph model. In~\cref{sec:eigenvectors} we provide a formula for the eigenvectors of the transition probability matrix of the simple edit process and a spectral formula for the commute time between subgraphs of the host graph.~\cref{sec:discussion} includes some discussion and remarks.

\section{Basic properties of the simple edit process}\label{sec:basic-properties}



Below we establish some basic properties of the simple edit process introduced in~\cref{defn:simple-edit-process}.

\begin{lemma}\label{lem:ergodicity-of-gemc}
    Let $\mathcal{H}=(\mathcal{V},\mathcal{E})$ be a given host graph, and let $\mathbf{p} = (p_e)_{e\in\mathcal{E}}$ be given satisfying $0<p_e<1$. Then the simple edit process $(G_t)_{t\geq 0}$ satisfies the following properties:
        \begin{enumerate}[label=\textit{(\roman*)}]
            \item $(G_t)_{t\geq 0}$ is irreducible and aperiodic,
            \item $(G_t)_{t\geq 0}$ has a unique stationary distribution given by 
                \begin{align}\label{eq:formula-for-gemc-stationary}
                    \pi(E) = \left(\prod_{e\in E} p_e\right) \left(\prod_{e\in\mathcal{E}\setminus E} 1-p_e\right),
                \end{align}	
            \item and $(G_t)_{t\geq 0}$ is reversible.
        \end{enumerate}
\end{lemma}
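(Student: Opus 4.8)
The plan is to work directly with the one-step transition probabilities of the chain and to verify the detailed balance equations, from which stationarity, uniqueness, and reversibility all follow at once. First I would record the transition matrix $\mathsf{P}$ explicitly: writing $m = |\mathcal{E}|$, for a state $E \subseteq \mathcal{E}$ and $e \notin E$ one has $\mathsf{P}(E, E\cup\{e\}) = p_e/m$; for $e \in E$ one has $\mathsf{P}(E, E\setminus\{e\}) = (1-p_e)/m$; the self-loop weight is $\mathsf{P}(E,E) = \frac{1}{m}\bigl(\sum_{e\in E} p_e + \sum_{e\notin E}(1-p_e)\bigr)$; and all other entries vanish. The memorylessness recorded in~\cref{eq:memorylessness} is exactly what makes this a genuine time-homogeneous Markov chain on the $2^m$ subgraphs.

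For \textit{(i)}, irreducibility is immediate because any two subsets $E, E' \subseteq \mathcal{E}$ are joined by a path in the state graph that adds or removes one edge at a time, and each such single-edge step has strictly positive probability since $0 < p_e < 1$; aperiodicity follows from $\mathsf{P}(E,E) > 0$ for every $E$, which again uses $0 < p_e < 1$ (and $\mathcal{E} \neq \emptyset$, so that $m \geq 1$ and the process is well-defined).

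For \textit{(ii)} and \textit{(iii)}, I would first check that $\pi$ from~\cref{eq:formula-for-gemc-stationary} is a probability distribution, which is the identity $\sum_{E \subseteq \mathcal{E}} \pi(E) = \prod_{e\in\mathcal{E}} \bigl(p_e + (1-p_e)\bigr) = 1$ obtained by expanding the product over edges. Then I would verify $\pi(E)\mathsf{P}(E,E') = \pi(E')\mathsf{P}(E',E)$ for all $E, E'$. The only nontrivial case is $E' = E \cup \{e\}$ with $e\notin E$, where $\pi(E')/\pi(E) = p_e/(1-p_e) = \mathsf{P}(E,E')/\mathsf{P}(E',E)$; all other ordered pairs (those differing in two or more edges, or with $E = E'$) make both sides zero or trivially equal. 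Detailed balance is precisely the assertion that the chain is reversible with respect to $\pi$, and summing it over $E$ gives $\pi\mathsf{P} = \pi$, so $\pi$ is stationary; uniqueness then follows from \textit{(i)}, since a finite irreducible Markov chain has a unique stationary distribution.

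The computations here are entirely routine. The only place that calls for care is bookkeeping the self-loop probabilities correctly when writing down $\mathsf{P}$, and observing that detailed balance holds automatically for state pairs that are non-adjacent in the state graph; no step presents a genuine obstacle.
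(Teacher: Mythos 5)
Your proof is correct, and it takes a slightly different route from the paper's. The paper proves \textit{(ii)} by verifying the global balance equation $\pi\mathsf{P}=\pi$ directly: it expands $(\pi\mathsf{P})(F)$ as a sum over neighboring states, uses the ratios $\pi(e^-F)=\frac{1-p_e}{p_e}\pi(F)$ and $\pi(e^+F)=\frac{p_e}{1-p_e}\pi(F)$, and collapses the sum to $\pi(F)$; reversibility \textit{(iii)} is then dispatched as ``a similar calculation.'' You instead verify the detailed balance equations $\pi(E)\mathsf{P}(E,E')=\pi(E')\mathsf{P}(E',E)$ for the single-edge-flip pairs, which is a purely local check, and obtain both stationarity (by summing over $E$) and reversibility in one stroke. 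Your route is arguably the more economical of the two, since detailed balance is the stronger statement and the paper ends up needing it anyway for \textit{(iii)}; the paper's route has the minor virtue of exhibiting the full structure of $(\pi\mathsf{P})(F)$, including the self-loop term, which is reused almost verbatim in the eigenvector computation of Theorem~\ref{thm:eigenvectors}. One peripheral quibble: the chain is time-homogeneous Markov simply because each update depends only on the current state, not because of the semigroup identity in~\cref{eq:memorylessness}; that identity is an algebraic property of the edits themselves and plays no role in this lemma. This does not affect the validity of your argument.
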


\begin{proof}
    Claim \textit{(i)} follows upon inspection. We establish \textit{(ii)} by direct calculation and show $\pi \mathsf{P} = \pi$, where $\mathsf{P}$ is the transition probability matrix for $(G_t)_{t\geq 0}$ and $\pi$ is as in~\cref{eq:formula-for-gemc-stationary}. To this end, fix $F\in 2^\mathcal{E}$, and for any $e\in {\mathcal{E}}$, let ${e^+}F$ (resp. ${e^-}F$) denote the set obtained by adding (resp. deleting) edge $e$ to (resp. from) $F$. We first note that, based upon the definition of $(G_t)_{t\geq 0}$, the entry of $\pi\mathsf{P}$ corresponding to $F$ can be expressed as follows:
        \begin{align*}
            \left(\pi \mathsf{P}\right)(F) &= \sum_{E\in 2^\mathcal{E}} \pi(E)\mathsf{P}_{EF}\notag\\
            &= \pi(F)\mathsf{P}_{FF} + \sum_{e\in F} \pi{({e^-}F)}\frac{p_e}{{m}}  + \sum_{e\notin F} \pi{({e^+}F)}\frac{1-p_e}{{m}}.
        \end{align*}
    The transition probability from $F$ to itself is given by the sum of the probabilities of adding (resp. removing) each edge which occurs (resp. does not occur) in $F$ already, i.e., we have following formula for the matrix entry $\mathsf{P}_{FF}$:
        \begin{align*}
            \mathsf{P}_{FF} &= \sum_{e\in F} \mathsf{P}_{F, {e^+}F} + \sum_{e\notin F} \mathsf{P}_{F, {e^-}F}.
        \end{align*}
    Thus we have
        \begin{align}\label{eq:transition-matrix-product}
            \left(\pi \mathsf{P}\right)(F)&= \sum_{e\in F} \left\{\pi{({e^-}F)}\frac{p_e}{{m}} + \pi{(F)}\frac{p_e}{{m}}\right\}\notag\\
            &\quad+\sum_{e\notin F} \left\{\pi{(F)}\frac{1-p_e}{{m}} + \pi{({e^+}F)}\frac{1-p_e}{{m}}\right\}.
        \end{align}
    with the convention that $\sum_{\varnothing}(\cdot) = 0$. Next, we observe from~\cref{eq:formula-for-gemc-stationary} that it holds $\pi(e^- F) = \frac{1 - p_e}{p_e} \pi(F)$ whenever $e\in F$ and that $\pi(e^+ F) = \frac{p_e}{1-p_e} \pi(F)$ if $e\notin F$. Thus by substitution into~\cref{eq:transition-matrix-product} we have:
        \begin{align*}
            \left(\pi \mathsf{P}\right)(F)&= \sum_{e\in F} \frac{\pi{(F)}}{{m}} +\sum_{e\notin F}  \frac{\pi{(F)}}{{m}} =\pi(F).
        \end{align*}
    Since \textit{(i)} guarantees the uniqueness of any stationary distribution, the claim follows. The proof of \textit{(iii)} follows from a similar calculation. 
\end{proof}

In the setting where $\mathcal{H}$ is the complete graph $K_n$, we have the following examples which show that many edge-independent random graph models can be realized as the stationary distributions of the simple edit process for various choices of $(p_e)_{e\in\mathcal{E}}$.

\begin{example}[Erd\H{o}s-R\'{e}nyi graphs]\normalfont\label{ex:stationary-er}
    Recall that an Erd\H{o}s-R\'{e}nyi random graph $\mathsf{G}(n, p)$ is constructed by starting with a vertex set $\mathcal{V}$ of cardinality $n$ and independently adding edge $e\in {\mathcal{V} \choose 2}$ to a graph $G$ with probability $p\in(0, 1)$, for each such $e$. The stationary distribution of the simple edit process on $K_n$ with edge probabilities $p_e = p$ for each $e\in {\mathcal{V} \choose 2}$ is equal to the distribution of $\mathsf{G}(n, p)$ on $2^{{\mathcal{V}\choose 2}}$.
\end{example}

\begin{example}[Chung-Lu graphs]\normalfont\label{ex:stationary-cl}
    The Chung-Lu expected degree model, first introduced in~\cite{aiello2001random}, is defined as follows. Fix a vertex set $\mathcal{V}$ and a sequence of positive real numbers $(k_v)_{v\in\mathcal{V}}$ with $\max_{v\in\mathcal{V}}k_v\leq ({\sum_{v\in\mathcal{V}} k_v})^{1/2}$,
    which serves as the expected degree sequence. Starting with no edges, for each pair $e=\{u, v\}$ with $u, v\in\mathcal{V}$, add the edge $e$ to graph with probability
        \begin{align*}
            p_{uv} = \frac{k_u k_v }{\sum_{w\in\mathcal{V}} k_w},
        \end{align*}
    independently of all other edges. Letting $\mathbf{p} = (p_{uv})_{u, v\in\mathcal{V}}$, the stationary distribution of the simple edit process on $K_n$ with edge probabilities $\mathbf{p}$ is equal in distribution to the Chung-Lu model.
\end{example}
    
\begin{example}[Stochastic Block Model]\normalfont\label{ex:stationary-pb}
    We consider the setting of~\cref{ex:stationary-er} in which the probabilities of adding pairs $\{u, v\}\in{\mathcal{V}\choose 2}$ are inhomogeneous in the following sense. Letting $A, B\subseteq \mathcal{V}$ form a disjoint, nonempty partition of the vertex set $\mathcal{V}$, define
        \begin{align*}
            p_{uv} &= \begin{cases}
                p &\text{ if } \{u, v\}\subseteq A\text{ or }\{u, v\}\subseteq B\\
                q &\text{ otherwise,}
            \end{cases},
        \end{align*}
    for $u, v\in\binom{\mathcal{V}}{2}$ and constants $p, q$ satisfying $0<p, q<1$. Letting $\mathbf{p} = (p_{e})_{e\in{\mathcal{V}\choose 2}}$, the stationary distribution of the simple edit process on $K_n$ with edge probabilities $\mathbf{p}$ is equal in distribution to the two-component stochastic block model.
\end{example}

\section{Left regular bands and the graph edit semigroup}\label{sec:lrbs-edit-semigroup}

In this section, we state a number of useful facts about the spectral theory for semigroups, and then apply them to the graph edit semigroup. Our notation and terminology on semigroups largely follow~\cite{brownSemigroupsRingsMarkov2000}.

Recall that a semigroup $\Sigma = (\Sigma, \cdot)$ is a set $\Sigma$ equipped with a binary associative operation $\cdot$ under which it is closed (and which we denote simply by concatenation of elements). We say that $\Sigma$ is a \textit{band} if each element is idempotent, i.e., $x^2 = x$ for each $x\in \Sigma$. We say that $\Sigma$ is a \textit{left regular band} (LRB) if it is a band and satisfies the \textit{memoryless} property, i.e., for each $x,y\in \Sigma$,~\cref{eq:memorylessness} holds. These two properties allow one to easily simplify long products of the form $x_1x_2\cdots x_n$ that may arise in $\Sigma$ by deleting terms in the product which occur more than once on the right. Moreover, it follows that $\Sigma$ is finite if it is finitely generated.

\begin{definition}[Graph edit semigroup]\label{def:graph-edit-semigroup}
    Let $\mathcal{H}=(\mathcal{V},\mathcal{E})$ be a fixed host graph and let $E\subseteq\mathcal{E}$ be a subset of edges. For each $e\in \mathcal{E}$, let the symbol ${e^+}$ denote the operation of ``add edge $e$ to $E$'' and let ${e^-}$ denote the operation of ``remove edge $e$ from $E$''. We call these {\normalfont simple edits}. We write, for $e_i\in {\mathcal{E}}$ and $\sigma_i\in\{\pm\}$,
        \begin{align}\label{eq:edit-introducton}
            e_k^{\sigma_k} e_{k-1}^{\sigma_{k-1}} \cdots e_{1}^{\sigma_1} E
        \end{align}
    to represent the graph obtained by {\normalfont first} performing the edit $e_{1}^{\sigma_1}$, then so on, until performing the edit $e_k^{\sigma_k}$. If $x, y$ are sequences of edits such as those applied to $E$ in~\cref{eq:edit-introducton}, we write $x = y$ if $xE = yE$ for each $E\subseteq\mathcal{E}$. The semigroup $\mathcal{S} = \mathcal{S}(\mathcal{H})$ generated by the set of simple edits is called the {\normalfont graph edit semigroup}. By convention, $\mathcal{S}$ contains an identity edit denoted $\widehat{0}$.
\end{definition}

Next we show that $\mathcal{S}$ is a left regular band.

\begin{lemma}\label{lem:lrb-graph-edits}
    The graph edit semigroup $\mathcal{S}$ is a left regular band.
\end{lemma}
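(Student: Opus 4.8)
The plan is to verify the two defining axioms of a left regular band directly from the definition of the simple edits $e^+$ and $e^-$ as maps on $2^{\mathcal{E}}$, then extend to arbitrary products by induction. Recall that $e^+$ adds $e$ to a subset and $e^-$ deletes $e$; these are genuine idempotent maps on $2^{\mathcal{E}}$, and the semigroup operation is composition (written right-to-left as in~\cref{eq:edit-introducton}). Since equality in $\mathcal{S}$ was defined pointwise ($x = y$ iff $xE = yE$ for all $E \subseteq \mathcal{E}$), it suffices to check the required identities as identities of functions.

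\textbf{Idempotency.} First I would check that every generator is idempotent: applying $e^+$ twice is the same as applying it once, since once $e$ has been added it is present, and likewise for $e^-$. For a general element $x = e_k^{\sigma_k}\cdots e_1^{\sigma_1}$, idempotency $x^2 = x$ should follow from the more basic observation below about how such a product acts: the net effect of $x$ on a set $E$ depends only on, for each edge $f$, the \emph{last} edit among $e_1^{\sigma_1},\dots,e_k^{\sigma_k}$ that touches $f$ (if any), and otherwise leaves $f$ in whatever state it had in $E$. Applying $x$ a second time touches each such $f$ in exactly the same way and in the same final state, so nothing changes; hence $xx = x$.

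\textbf{Memorylessness.} The key identity is $xyx = xy$ for all $x, y \in \mathcal{S}$. The cleanest route is to first establish it for $x$ a single generator $e^{\sigma}$ and $y$ arbitrary, namely $e^{\sigma} y e^{\sigma} = e^{\sigma} y$: on any set $E$, the inner $e^{\sigma}$ forces edge $e$ into a definite state, then $y$ may or may not further modify $e$ and the other edges, and then the outer $e^{\sigma}$ again forces $e$ into the same definite state — which is exactly what $e^{\sigma} y$ does, since an edit of $e$ overrides whatever $y$ did to $e$ while leaving all other edges as $y$ left them. (One checks the four sign combinations, but they are all the same argument.) Then for general $x = e_k^{\sigma_k}\cdots e_1^{\sigma_1}$ I would induct on $k$, repeatedly inserting commuting generators: using that generators acting on \emph{distinct} edges commute and that $e^{\sigma} z e^{\sigma} = e^{\sigma} z$, one pushes each factor of $x$ in the copy on the right leftward past $y$ and cancels it against its twin in the copy of $x$ on the left, collapsing $xyx$ to $xy$. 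Alternatively — and perhaps more transparently — I would prove a single structural lemma: for any word $w = f_r^{\tau_r}\cdots f_1^{\tau_1}$, the map $w$ acting on $E$ sets edge $f$ to ``present'' if the last occurrence of $f$ in $w$ carries sign $+$, to ``absent'' if it carries sign $-$, and to ``$f \in E$'' if $f$ does not occur in $w$; from this normal-form description, both $x^2 = x$ and $xyx = xy$ are immediate, since deleting the rightmost copy of any repeated edit-letter does not change the position of any \emph{last} occurrence.

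\textbf{Main obstacle.} There is no deep difficulty here; the only thing requiring care is bookkeeping. The main point to get right is the precise statement and proof of the normal-form/``last-occurrence'' lemma, including the behavior on edges not mentioned in the word and the interaction of $+$ and $-$ letters for the \emph{same} edge. Once that lemma is in place, both band and memorylessness axioms fall out with one line each, and the remark already noted in the text — that $\mathcal{S}$ is finite because it is finitely generated — follows from the general theory of left regular bands cited earlier.
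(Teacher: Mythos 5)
Your proposal is correct and follows essentially the same route as the paper: verify idempotence and the identity $e^{\sigma}ye^{\sigma}=e^{\sigma}y$ on generators, then extend to arbitrary products. Your ``last-occurrence'' normal-form lemma is just a more systematic packaging of the paper's informal observation that the terminal edit of an edge overrides earlier ones, and it has the minor virtue of making idempotence of general products (not only of generators) explicit, a step the paper passes over with ``thus idempotence extends to any sequence of edits.''
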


\begin{proof}
    For any $e\in\mathcal{E}$, we have ${e^+}{e^+} = e^+$ and similarly for $e^-$. Thus idempotence extends to any sequence of edits. For the memoryless property, let $y\in \mathcal{S}$ be any edit and observe that the graph ${e^+}y{e^+}E$ is the same as the graph ${e^+} y E$ since the terminal operation ${e^+}$, regardless of the edits prescribed by $y$, returns the edge $e$ to the graph. Thus the initial addition of $e$ becomes redundant. This also extends to products of elements, and we have that $xyx = xy$ holds for any $x, y\in \mathcal{S}$. The claim follows.
\end{proof}

We note that each $x\in \mathcal{S}$ may be written in the form
    \begin{align*}
        x = e_k^{\sigma_k} e_{k-1}^{\sigma_{k-1}} \cdots e_{1}^{\sigma_1}
    \end{align*}
for some finite collection of simple edits $\{e_i^{\sigma_i}\}_{i=1}^k$. We refer to this as an \textit{edit enumeration of} $x$, and among all such enumerations, those with minimal length as \textit{reduced edit enumerations of} $x$. If $e\neq f$ in ${\mathcal{E}}$ and $\sigma,\tau\in\{\pm\}$, then naturally $e^\sigma f^\tau = f^\tau e^\sigma$ so that distinct edges may appear anywhere in the sequence. Consequently, we observe that any such reduced edit enumeration must contain at most one element of the form $e^{(\cdot)}$ for any particular $e\in {\mathcal{E}}$, since ${e^+}{e^-} = {e^+}$ and ${e^-}{e^+} = {e^-}$ for any $e\in\mathcal{E}$. Therefore, each reduced edit enumeration of a given element $x\in \mathcal{S}$ contains the same set of simple edits, possibly appearing in a different order. For $x\in\mathcal{S}$, we write $\simple{x}$ for the set of simple edits obtained from any reduced edit enumeration of $x$.

For a generic LRB $\Sigma$, we can equip $\Sigma$ with the relation $\leqslant$ defined by $x\leqslant y$ if $xy = y$. It is relatively straightforward to show that $\leqslant$ is transitive (for any semigroup) and reflexive (since $\Sigma$ is a band); moreover, $\leqslant$ is antisymmetric and thus defines a partial order on $\Sigma$ owing to the memorylessness property (see~\cite[Proposition 7]{brownSemigroupsRingsMarkov2000}). In the case of the graph edit semigroup $\mathcal{S}$, this partial order can be nicely understood in terms of the edges affected by particular edits. 

\begin{lemma}\label{lem:right-dominance-char}
    Let $x$ and $y$ be two fixed elements of the graph edit semigroup $\mathcal{S}$. Then $x\leqslant y$ if and only if $\simple{x}\subseteq\simple{y}$.
\end{lemma}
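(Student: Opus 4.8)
The plan is to prove both implications directly by manipulating reduced edit enumerations, using the fact (established just before the statement) that all reduced edit enumerations of an element $x\in\mathcal{S}$ consist of the same set $\simple{x}$ of simple edits, possibly reordered, and that simple edits on distinct edges commute.

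For the ``if'' direction, suppose $\simple{x}\subseteq\simple{y}$. I would write a reduced edit enumeration of $y$ and, since the simple edits appearing in $x$ form a subset of those in $y$ and distinct-edge edits commute, reorder the enumeration of $y$ so that the simple edits belonging to $\simple{x}$ appear on the right. Concretely, this exhibits an edit enumeration of $y$ of the form $y = z\,x'$ where $x'$ is an enumeration using exactly the edits of $\simple{x}$ and $z$ uses the remaining edits of $\simple{y}$. Now $x'$ and $x$ have the same simple-edit set but possibly different orders; however, for each edge $e$ only the \emph{last} (leftmost) occurrence of $e^{(\cdot)}$ matters, so reordering within $x'$ and $x$ can change the result only if they prescribe opposite signs on some edge — but they prescribe the same set of simple edits, hence the same sign, so $x'E = xE$ for every $E$ only after we also ensure the order issue is moot. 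The cleanest route is to observe $xy$: starting from $y = zx'$, compute $xy = xzx'$; since $\simple{x}\subseteq\simple{y}=\simple{z}\cup\simple{x'}$ and the terminal block $x'$ contains, for every $e$ with $e^{(\cdot)}\in\simple{x}$, an occurrence that overrides the initial $x$, the leading $x$ is redundant (this is exactly the argument used in the proof of \cref{lem:lrb-graph-edits}), so $xy = zx' = y$, i.e.\ $x\leqslant y$.

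For the ``only if'' direction, suppose $x\leqslant y$, i.e.\ $xy = y$. Fix a simple edit $e^\sigma\in\simple{x}$; I want to show $e^\sigma\in\simple{y}$. Consider a reduced edit enumeration of $y$; if it contains no simple edit on the edge $e$, then pick a subset $E\subseteq\mathcal{E}$ on which $xy$ and $y$ disagree at $e$: since $y$ does not touch edge $e$, the membership of $e$ in $yE$ equals its membership in $E$, whereas $xyE = x(yE)$ has $e$ forced to the state dictated by $\sigma$ (because $e^\sigma\in\simple{x}$ and the leftmost occurrence of an $e$-edit in any enumeration of $x$ is $e^\sigma$). Choosing $E$ with $e$ in the opposite state to $\sigma$ gives $xyE\neq yE$, contradicting $xy=y$. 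Hence $y$ must contain a simple edit on $e$; and it must be $e^\sigma$ and not $e^{-\sigma}$, since otherwise applying $xy$ versus $y$ to a set $E$ would again disagree at $e$ (the composite $xy$ ends — rightmost-applied — with $y$'s $e$-edit $e^{-\sigma}$... wait, $x$ is applied \emph{after} $y$). Let me restate: in $xy$, we apply $y$ first, then $x$; so if $\simple{x}$ contains $e^\sigma$ its $e$-edit is applied last and forces state $\sigma$, whereas $y$ alone forces whatever $y$'s $e$-edit dictates. For $xy = y$ we then need $y$ to already force state $\sigma$ on $e$, i.e.\ $e^\sigma\in\simple{y}$. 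Thus $\simple{x}\subseteq\simple{y}$.

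The main obstacle is bookkeeping the interaction between the \emph{order} of simple edits within an enumeration and the \emph{set} $\simple{x}$: one must consistently use that only the leftmost (last-applied) occurrence of an $e$-edit determines the output on edge $e$, and that within $x$ there is at most one such occurrence by the reducedness discussion preceding the lemma. Once that principle is isolated as the key sub-observation — ideally stated explicitly, e.g.\ ``for any $z\in\mathcal{S}$ and $E\subseteq\mathcal{E}$, $e\in zE$ iff $e^+\in\simple{z}$, or ($e^-\notin\simple{z}$ and $e\in E$)'' — both implications reduce to short case checks, and the argument in the ``if'' direction is essentially the redundancy computation already carried out in \cref{lem:lrb-graph-edits}.
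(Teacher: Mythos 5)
Your proposal is correct and follows essentially the same route as the paper's proof: both directions come down to fixing reduced edit enumerations and doing a case analysis on whether each edge touched by $x$ appears in $\simple{y}$, and with which sign. The only difference is cosmetic --- you package the ``if'' direction via the decomposition $y = zx'$, whereas the paper argues directly that $e^\sigma y = y$ for each $e^\sigma\in\simple{y}$ --- and your mid-proof self-correction about the order of application does land on the right argument.
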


\begin{proof}
    Assume $x\leqslant y$. Fix reduced edit enumerations of $x, y$ as follows:
        \begin{align}\label{eq:mars-of-xy}
            x &= e_s^{\sigma_s} e_{s-1}^{\sigma_{s-1}} \cdots e_{1}^{\sigma_1} \notag\\
            y &= f_t^{\tau_t} f_{t-1}^{\tau_{t-1}} \cdots f_{1}^{\tau_1},
        \end{align}
    where $s, t\geq 0$, $e_i, f_i\in {\mathcal{E}}$, and $\sigma_i, \tau_i\in\{\pm\}$.
    Letting the edit $e_i^{\sigma_i}$ be fixed, if $e_i = f_\ell$ for some $\ell$, then it must hold that $\sigma_i = \tau_\ell$. Otherwise if, without loss of generality, we have $\sigma_i=+$ and $E\subseteq\mathcal{E}$, it would hold that $e_i\in xy E$ and $e_i\notin y E$ and thus $xy\neq y$, a contradiction. It follows that $e_i^{\sigma_i}\in\simple{y}$ in this case, and thus we assume that $e_i \notin\{f_\ell\}_{i=1}^t$. Assume again, without loss of generality, that $\sigma_i = +$. Let $E_0=\mathcal{E}\setminus\{e_i\}$ and let $E_1 = yE_0$. Then by the idempotence property, $yE_1 = E_1$, but $xy E_1 = xE_1 \subsetneq E_1$ by choice of $E_0$ so that $xy E_1\neq yE_1$ and therefore $x\nleqslant y$, which is a contradiction. Thus $\simple{x}\subseteq\simple{y}$ holds.
    
    For the converse, assume $\simple{x}\subseteq\simple{y}$ and once again fix reduced edit enumerations of $x, y$ as in~\cref{eq:mars-of-xy}. We have that each $e_i^{\sigma_i}$ appears in the reduced edit enumeration of $y$, and therefore that $e_i^{\sigma_i} y = y$. The claim follows by extension to each simple edit and thus to $x$.
\end{proof}

We can equip a generic LRB $\Sigma$ with the further relation $\prec$ by writing $x\prec y$ if $yx = y$. It is straightforward to show that $\prec$ is transitive and reflexive (in the latter case, since $\Sigma$ is a band), but in general not antisymmetric and therefore not a partial order. Define $x\simeq y$ whenever $x\prec y$ and $y\prec x$ and set $L = \Sigma \backslash \simeq$. Then $L$ is a partially ordered set under $\prec$ and if we denote by $\supp:\Sigma\rightarrow L$ the quotient map, we have that $x\prec y$ holds if and only if $\supp{x} \prec \supp{y}$.

Since $xy \succ x$ and $xy \succ y$ for each $x, y\in \Sigma$ (because of the memoryless and idempotence properties, respectively), we have that $L$ is a join semilattice, and moreover, it can be shown that for each $x, y\in \Sigma$, we have
    \begin{align}\label{eq:supp-property}
        \supp{xy} = \supp{x} \cup \supp{y}.
    \end{align}
Elements $X\in L$ are called \textit{flats} in the hyperplane arrangement literature (see, e.g.,~\cite{brown1998random}). If $\Sigma$ has an identity element (and therefore $L$ has a minimal element) and if $L$ is finite, then $L$ will be a true lattice (see, e.g.,~\cite[Section 3.3]{stanley2011enumerative}). The following lemma completely characterizes this lattice in the case of the graph edit semigroup $\mathcal{S}$.

\begin{lemma}\label{lem:left-dominance-char}
    Let $x$ and $y$ be two fixed elements of the graph edit semigroup $\mathcal{S}$. Then $x\prec y$ if and only if $\supp{x}\subseteq\supp{y}$. The corresponding lattice $\mathcal{L} = \mathcal{S}\backslash \simeq$ can be identified as the Boolean algebra of subsets of edges ${\mathcal{E}}$.
\end{lemma}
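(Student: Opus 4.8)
The plan is to characterize the relation $\prec$ on $\mathcal{S}$ in the same spirit as~\cref{lem:right-dominance-char} handled $\leqslant$, working with reduced edit enumerations. First I would fix $x, y \in \mathcal{S}$ with reduced edit enumerations $x = e_s^{\sigma_s}\cdots e_1^{\sigma_1}$ and $y = f_t^{\tau_t}\cdots f_1^{\tau_1}$, and recall that $\simple{x}$ is well-defined (independent of the chosen reduced enumeration). The key observation is that the edit $e^\sigma$ acts on any $E \subseteq \mathcal{E}$ by forcing membership of $e$: $e^+E \ni e$ always, and $e^-E \not\ni e$ always, while leaving all other edges of $E$ untouched. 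Consequently, for a product $z = g_r^{\rho_r}\cdots g_1^{\rho_1}$, the set $zE$ is determined on each edge $g$ appearing in $\simple{z}$ by the \emph{leftmost} (last-applied) occurrence of a $g^{(\cdot)}$ in the enumeration, and agrees with $E$ on all edges not touched by $\simple{z}$.

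Next I would prove the forward direction: assume $x \prec y$, i.e.\ $yx = y$, and show $\simple{x} \subseteq \simple{y}$ — equivalently, that the underlying edge sets satisfy this containment, which is exactly $\supp{x} \subseteq \supp{y}$ once we pass to the quotient $\mathcal{L}$. Suppose for contradiction some $e_i^{\sigma_i} \in \simple{x}$ has its edge $e_i$ not appearing in $\simple{y}$. Pick $E \subseteq \mathcal{E}$ with $e_i \notin E$; then $yE$ still omits $e_i$ (since $y$ does not touch $e_i$), but $xE$, and hence $yxE$, is controlled on $e_i$ by $e_i^{\sigma_i}$. If $\sigma_i = +$ we get $e_i \in yxE$ while $e_i \notin yE$; if $\sigma_i = -$, pick instead $E$ with $e_i \in E$ and argue symmetrically. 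Either way $yxE \neq yE$, contradicting $yx = y$. (The sign must also match when $e_i$ does appear in $\simple{y}$, but for the edge-set containment that subtlety is irrelevant — unlike in~\cref{lem:right-dominance-char}, where $\leqslant$ genuinely remembers signs, here $\prec$ only sees which edges are touched, which is precisely why $\prec$ fails to be antisymmetric and why the quotient $\mathcal{L}$ is Boolean rather than the full poset $\mathcal{S}$.)

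For the converse, assume $\supp{x} \subseteq \supp{y}$. Using~\cref{eq:supp-property} I would note that $\supp{yx} = \supp{y} \cup \supp{x} = \supp{y}$, so $yx$ and $y$ touch exactly the same edges; it remains to see $yxE = yE$ for all $E$. On any edge $g$ in this common support, $yxE$ and $yE$ are both determined by the leftmost $g$-edit inside $y$ (which is last-applied in both products), hence agree; on any edge outside the support both equal $E$. Thus $yx = y$, i.e.\ $x \prec y$. Finally, to identify $\mathcal{L}$: the map $\supp$ sends $x$ to its set of touched edges $\{e : e^{(\cdot)} \in \simple{x}\} \subseteq \mathcal{E}$, and the equivalence $x \simeq y$ holds iff $x$ and $y$ touch the same edge set, so $\mathcal{L} = \mathcal{S}/{\simeq}$ is in bijection with $2^{\mathcal{E}}$; the order $\prec$ becomes $\subseteq$ by the two directions just proved, and the identity edit $\widehat{0}$ maps to $\varnothing$, the bottom element. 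Hence $\mathcal{L}$ is the Boolean algebra on $\mathcal{E}$.

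I expect the only mildly delicate point to be the bookkeeping in the converse — namely justifying cleanly that the value of a reduced product on a given edge depends only on the last-applied edit of that edge, so that $yx$ and $y$ cannot disagree anywhere. This is intuitively obvious from the "forcing" description of $e^\pm$, but writing it so it reads rigorously (rather than waving at "the terminal operation wins") is where the care goes; everything else is a direct translation of the $\leqslant$ argument with signs deliberately forgotten.
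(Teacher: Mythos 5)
Your proof is correct and takes essentially the same approach as the paper's: the paper simply states that the argument mirrors that of \cref{lem:right-dominance-char} with signs forgotten (because the last-applied edit on each edge wins), which is exactly the witness-set construction and ``leftmost edit determines membership'' bookkeeping you carry out, followed by identifying $\mathcal{L}$ with $2^{\mathcal{E}}$ via \cref{eq:supp-property}. Your write-up is more detailed than the paper's one-paragraph sketch, but there is no substantive difference in method.
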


\begin{proof}
    This proof follows that of~\cref{lem:right-dominance-char}, with the only distinction being that in order for $yx = y$, it must hold that all of the edges (not including signs) in any reduced edit enumeration of $x$ must also appear in any reduced edit enumeration of $y$. For the latter claim, we therefore have that $yx = y$ and $xy = x$ if and only if $x, y$ consist of simple edits which act on the same subset of edges. That $\mathcal{L}$ is specifically the \emph{Boolean} algebra on these subsets follows from~\cref{eq:supp-property}.
\end{proof}

\begin{remark}\label{rmk:quotient-map}
    It follows from the proof of~\cref{lem:left-dominance-char} that the quotient map $\supp:\mathcal{S}\rightarrow \mathcal{L}$ can be realized as the map which sends each edit to the subset of $\mathcal{E}$ consisting of edges on which the edit acts in a nontrivial manner.
\end{remark}

We say that an element $c\in \Sigma$ is a \textit{chamber} provided that $cx = c$ for each $x\in \Sigma$, or equivalently, $c$ is maximal in the partially ordered set $(\Sigma, \leqslant)$. The set of chambers in an LRB $\Sigma$ forms a two-sided ideal (see, e.g.,~\cite[Proposition 9]{brownSemigroupsRingsMarkov2000}). We denote by $\mathcal{C}$ the ideal of chambers in the graph edit semigroup $\mathcal{S}$. The following lemma characterizes chambers in this setting.

\begin{lemma}\label{lem:chambers-of-s}
    An element $c$ in the graph edit semigroup $\mathcal{S}$ is a chamber if and only if $\supp{c} = {\mathcal{E}}$. In turn, by identifying $c$ with the set of edges $e\in{\mathcal{E}}$ such that ${e^+} \in\simple{c}$, each chamber may be identified as a subset $E\in 2^{\mathcal{E}}$ in a one-to-one and onto fashion.
\end{lemma}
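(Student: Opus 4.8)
The plan is to prove the two assertions of Lemma~\ref{lem:chambers-of-s} in turn, leaning on the characterizations already established.

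First I would prove the equivalence ``$c$ is a chamber $\iff$ $\supp{c} = \mathcal{E}$''. By \cref{lem:left-dominance-char}, the lattice $\mathcal{L}$ is the Boolean algebra on $\mathcal{E}$, with top element $\mathcal{E}$ itself; so it suffices to show that $c$ is maximal in $(\mathcal{S},\leqslant)$ precisely when $\supp{c}$ is maximal in $\mathcal{L}$. For the forward direction, suppose $c$ is a chamber, i.e.\ $cx = c$ for all $x\in\mathcal{S}$. Take any $e\in\mathcal{E}$; then $c = c\,e^+$, and by the identity~\cref{eq:supp-property} we get $\supp{c} = \supp{c}\cup\supp{e^+} = \supp{c}\cup\{e\}$, so $e\in\supp{c}$. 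Since $e$ was arbitrary, $\supp{c} = \mathcal{E}$. For the converse, suppose $\supp{c} = \mathcal{E}$; I must show $cx = c$ for every $x\in\mathcal{S}$. Fix a reduced edit enumeration of $x$; since $\supp{c} = \mathcal{E}$, for every simple edit $e^\sigma$ appearing in $x$ there is a simple edit $e^{\sigma'}$ in $\simple{c}$ (acting on the same edge $e$). Then $c\,e^\sigma$ has the same effect on any $E\subseteq\mathcal{E}$ as $c$ does: whatever edge-membership decision $e^\sigma$ makes on $e$ is immediately overwritten by the $e^{\sigma'}$ already present in $c$ (after using the commutation $e^\sigma f^\tau = f^\tau e^\sigma$ for distinct edges to push the relevant $c$-factor to the right of $e^\sigma$, together with $e^{\sigma'}e^\sigma = e^{\sigma'}$). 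Iterating over the simple edits of $x$ one at a time yields $cx = c$.

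Next I would establish the identification of chambers with subsets. Given a chamber $c$, since $\supp{c} = \mathcal{E}$, every reduced edit enumeration of $c$ contains, for each $e\in\mathcal{E}$, exactly one simple edit of the form $e^{\sigma}$ (``exactly one'' because a reduced enumeration contains at most one term touching each edge, as noted just before \cref{lem:right-dominance-char}, and ``at least one'' because $\supp{c}=\mathcal{E}$). Thus $\simple{c}$ determines, and is determined by, the function $e\mapsto\sigma_e\in\{+,-\}$, equivalently the set $E_c := \{e\in\mathcal{E} : e^+\in\simple{c}\}$. The map $c\mapsto E_c$ is well-defined since $\simple{c}$ is independent of the chosen reduced enumeration. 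It is onto: given any $E\subseteq\mathcal{E}$, the edit $c_E$ obtained by composing $e^+$ for $e\in E$ and $f^-$ for $f\notin E$ (in any order; they commute) has $\supp{c_E}=\mathcal{E}$, hence is a chamber, and $E_{c_E}=E$. It is one-to-one: if $E_c = E_{c'}$ then $\simple{c}=\simple{c'}$, and since an element of $\mathcal{S}$ acts on $2^\mathcal{E}$ only through its set of simple edits (distinct edges commute, and the sign is unambiguous), we get $c=c'$. Concretely, applying $c$ to any starting set $E$ yields exactly $E_c$ (each edge's final membership is dictated by its unique sign in $\simple{c}$), which also re-proves injectivity.

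I do not anticipate a serious obstacle here; the lemma is essentially a bookkeeping consequence of the structure already set up. The one point requiring care is the converse direction of the equivalence, where I must argue cleanly that $\supp{c}=\mathcal{E}$ forces $cx=c$ for \emph{all} $x$, not merely for simple edits $x$ --- this is where the memoryless/idempotence simplification of products (and the commutation of distinct-edge operators) does the real work, and I would present it by reducing an arbitrary $x$ to its reduced edit enumeration and absorbing its simple edits into $c$ one at a time. A secondary subtlety is making precise that ``an element of $\mathcal{S}$ is determined by its set of simple edits together with their signs'', which is immediate from \cref{def:graph-edit-semigroup} since two edit sequences are identified exactly when they induce the same map on $2^\mathcal{E}$, and $\simple{c}$ with $\supp{c}=\mathcal{E}$ specifies that map completely.
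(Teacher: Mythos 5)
Your proof is correct and takes essentially the same route as the paper's: the paper simply notes that $c$ is a chamber iff $x\prec c$ for every $x$ and invokes \cref{lem:left-dominance-char} ($x\prec c\iff\supp{x}\subseteq\supp{c}$), whereas you re-derive both directions of that equivalence from first principles, using \cref{eq:supp-property} for the forward implication and the commutation/absorption argument for the converse (which could be shortened by citing the lemma directly). The identification of chambers with subsets of $\mathcal{E}$, which the paper dismisses as straightforward, is worked out correctly and completely in your writeup.
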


\begin{proof}
    Let $c\in \mathcal{S}$. Then we have that $c$ is a chamber, by definition, whenever $cx = c$ for each $x\in \mathcal{S}$, i.e., whenever $c\succ x$ for each $x$. From~\cref{lem:left-dominance-char}, this holds if and only if $\supp{c}$ contains all possible edges in $\mathcal{E}$. The proof of the latter claim is straightforward and follows along the lines of the proof of~\cref{lem:left-dominance-char}.
\end{proof}

We remark that, in light of~\cref{lem:chambers-of-s}, a random walk on the chambers of $\mathcal{S}$ may be identified as a random walk on labeled subgraphs of $\mathcal{H}$. In~\cref{sec:application-subsemigroups}, we consider the case of semigroups generated by compound edits and random walks on their chambers.

\section{Eigenvalues and mixing times of the simple edit process}\label{sec:walks-on-chambers}

For a LRB $S$ and a probability measure $w \in\mathbb{R}^S$, we define the \textit{semigroup random walk} according to the transition probability matrix $P$ given by
    \begin{align}\label{eq:semi-rw}
        P(x, y) = \sum_{\substack{z\in S\\zx=y}} w_z, \quad x, y\in S.
    \end{align}
Notice that if $C$ is the set of chambers in $S$, then for each $c\in C$, $P(c, x) > 0$ only when there exists $z\in S$ such that $w_z>0$ and $zc=x$. But $C$ is a left ideal so $zc\in C$ and thus $x\in C$ as well. In other words, if and when the random walk reaches the ideal of chambers, it will remain in the ideal for all time. Therefore, rather than studying a Markov chain on the state space $S$ we instead use a state space of the chambers $C$. 

Our main result in this section characterizes the eigenvalues of the transition probability matrix $\mathsf{P}$ corresponding to the simple edit process introduced in~\cref{defn:simple-edit-process}. We note that~\cref{thm:intro-eigenvalues} follows from the following.

\begin{theorem}\label{thm:eigenvalues-state-graph}
    Let $\mathcal{S} = \mathcal{S}(\mathcal{H})$ denote the graph edit semigroup corresponding to the host graph $\mathcal{H}$ as in~\cref{def:graph-edit-semigroup}. Let $(w_x)_{x\in \mathcal{S}}$ be a probability distribution on $\mathcal{S}$, and let $\mathsf{P}$ be the Markov transition probability matrix for the random walk on the chambers of $\mathcal{S}$ given in~\cref{eq:semi-rw}. Then $\mathsf{P}$ is diagonalizable and has an eigenvalue with multiplicity one for each subset $T\subseteq {\mathcal{E}}$ given by
        \begin{align}\label{eq:eigenvalue-01}
            \lambda_T = \sum_{\substack{y\in \mathcal{S} \\\supp{y} \subseteq T}} w_y.
        \end{align}
    In the special case where $(w_x)_{x\in S}$ is nonzero exactly on the set of simple edits with edit probabilities determined by $\mathbf{p} = (p_e)_{e\in\mathcal{E}}$ such that $0<p_e<1$ and which satisfies
        \begin{align*}
            w_{e^+} = 1 - w_{e^-} = \frac{p_e}{{m}},
        \end{align*}
    then the semigroup random walk on the chambers $\mathcal{C}$ of $\mathcal{S}$ has the same transition probability matrix as the simple edit process with edge probabilities $\mathbf{p}$. Moreover, its eigenvalues are indexed by subsets $T\subseteq\mathcal{E}$ and for each such subset we have
        \begin{align}\label{eq:lambda-T}
            \lambda_T = \frac{|T|}{{m}},
        \end{align}
    independent of $(p_e)_{e\in\mathcal{E}}$.
\end{theorem}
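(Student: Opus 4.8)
The plan is to invoke the general spectral theory for random walks on chambers of a left regular band, as developed by Bidigare–Hanlon–Rockmore and Brown, and then specialize it to the graph edit semigroup $\mathcal{S}$. The key external input is the eigenvalue formula for such walks: for a probability measure $w$ on an LRB $S$ with support lattice $L$, the transition matrix on chambers is diagonalizable with eigenvalues $\lambda_X = \sum_{y:\,\supp{y}\subseteq X} w_y$ indexed by flats $X\in L$, and the multiplicity of $\lambda_X$ is $|\mu_L|$-type data coming from the M\"obius function of $L$ — in particular, every flat $X$ contributes at least one eigenvector. Since \cref{lem:left-dominance-char} identifies $L$ with the Boolean algebra $2^{\mathcal{E}}$, each flat is literally a subset $T\subseteq\mathcal{E}$, giving the indexing in \cref{eq:eigenvalue-01}. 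I would then argue that on the Boolean algebra the M\"obius-function multiplicities all equal one: the number of chambers is $2^m$, and summing the multiplicities over all $2^m$ subsets must recover $2^m$, which forces each to be exactly one (alternatively, cite the classical computation that for the Boolean lattice the relevant multiplicities are all $1$, matching the Tsetlin library / hyperplane arrangement case for the braid arrangement coordinate-wise).

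For the second half, I would first check the bookkeeping claim that the semigroup random walk with weights $w_{e^+} = p_e/m$, $w_{e^-} = (1-p_e)/m$ (supported on simple edits) has exactly the transition matrix of \cref{defn:simple-edit-process}. This is a direct unwinding of \cref{eq:semi-rw}: a chamber $c$ corresponds via \cref{lem:chambers-of-s} to an edge set $E\subseteq\mathcal{E}$, and applying $e^+$ sends $E\mapsto E\cup\{e\}$ while $e^-$ sends $E\mapsto E\setminus\{e\}$, so $P(E, E\cup\{e\}) $ picks up $w_{e^+}=p_e/m$ and $P(E,E\setminus\{e\})$ picks up $w_{e^-}=(1-p_e)/m$, with the diagonal collecting the "no-op" cases — exactly the dynamics of \cref{defn:simple-edit-process} after choosing $e$ uniformly. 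Then I would evaluate \cref{eq:eigenvalue-01} for this $w$: since $\supp{e^\sigma} = \{e\}$ by \cref{rmk:quotient-map}, and $\supp{\widehat 0}=\varnothing$, the only edits $y$ with $\supp{y}\subseteq T$ among the support of $w$ are the simple edits $e^{\pm}$ with $e\in T$ (the identity has weight zero), so
\[
\lambda_T \;=\; \sum_{e\in T}\bigl(w_{e^+} + w_{e^-}\bigr) \;=\; \sum_{e\in T}\frac{p_e}{m} + \frac{1-p_e}{m} \;=\; \frac{|T|}{m},
\]
which is \cref{eq:lambda-T} and is manifestly independent of $\mathbf{p}$. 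The reindexing statement "$\lambda = k/m$ with multiplicity $\binom{m}{k}$" then follows by grouping the $2^m$ subsets $T$ by cardinality.

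The main obstacle I anticipate is making the multiplicity claim fully rigorous rather than just citing it: one must be careful that the cited eigenvalue theorem (e.g. \cite[Thm. 1]{brownSemigroupsRingsMarkov2000}) really gives multiplicity equal to the number of chambers $c$ with $\supp{c}$ above $X$ in an appropriate sense, or the M\"obius-function expression, and then that this evaluates to $1$ on every level of the Boolean algebra. The cleanest route is probably the counting argument: diagonalizability plus the fact that eigenvalues are indexed by the $2^m$ flats, combined with $\dim = |\mathcal{C}| = 2^m$, forces all multiplicities to be exactly one; this sidesteps any delicate M\"obius computation. A secondary, purely expository care point is the correspondence between chambers and labeled subgraphs — but this is already handled by \cref{lem:chambers-of-s}, so it only needs to be invoked, not reproved.
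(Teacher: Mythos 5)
Your overall route is the same as the paper's: invoke Brown's Theorem 1 to get diagonalizability and the eigenvalue formula indexed by flats, use \cref{lem:left-dominance-char} to identify the support lattice with the Boolean algebra $2^{\mathcal{E}}$, and then unwind \cref{eq:semi-rw} via \cref{lem:chambers-of-s} to match the simple edit process and compute $\lambda_T=\sum_{e\in T}(w_{e^+}+w_{e^-})=|T|/m$. That second half of your argument is exactly what the paper does and is fine.

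The gap is in the multiplicity claim, precisely where you anticipated trouble. Your ``cleanest route'' --- there are $2^m$ flats, the multiplicities sum to $\dim\mathbb{R}^{\mathcal{C}}=2^m$, hence each is $1$ --- only works if you already know $m_T\geq 1$ for every flat $T$, and Brown's theorem does not supply that lower bound: in general $m_X=\sum_{Y\supseteq X}\mu(X,Y)c_Y$ can vanish (e.g.\ for the free LRB of the Tsetlin library the multiplicities are derangement numbers, one of which is $0$). Without the lower bound, a count of $2^m$ over $2^m$ flats is consistent with some multiplicities being $0$ and others $\geq 2$. The paper closes this by actually computing: $c_T=|\{y\in\mathcal{C}:y\geqslant x\}|=2^{m-|T|}$ for any $x$ with $\supp{x}=T$ (via \cref{lem:right-dominance-char}), and then a downward induction on $|T|$ using the defining relation $\sum_{T'\supseteq T}m_{T'}=c_T$ forces $m_T=1$ at every level. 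Equivalently, the ``delicate M\"obius computation'' you wanted to sidestep is a one-line binomial identity on the Boolean lattice,
\begin{align*}
    m_T=\sum_{T'\supseteq T}(-1)^{|T'|-|T|}\,2^{m-|T'|}=\sum_{j=0}^{m-|T|}\binom{m-|T|}{j}(-1)^j 2^{m-|T|-j}=(2-1)^{m-|T|}=1,
\end{align*}
so there is no need for the counting shortcut. (Also, the classical case to cite would be the coordinate/Boolean hyperplane arrangement, whose intersection lattice is Boolean, not the braid arrangement, whose lattice is the partition lattice.)
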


\begin{remark}\label{rmk:multiplicities}
    We remark that the eigenvalue $\lambda_T$ in~\cref{thm:eigenvalues-state-graph} that is \textit{indexed by} $T\subseteq\mathcal{E}$ occurs with multiplicity $m_T=1$, but distinct $T$ may give rise to the same value $\lambda\in\mathbb{R}$. In the case of the simple edit process, each eigenvalue of the form $\frac{k}{{m}}$ for some $0\leq k\leq m$ occurs with multiplicity ${{m}\choose k}$.
\end{remark}

\begin{proof}[Proof of~\cref{thm:eigenvalues-state-graph}]
    The main tool that we use here is~\cite[Thm. 1]{brownSemigroupsRingsMarkov2000}, from which it follows that the eigenvalues are as in~\cref{eq:eigenvalue-01}, each with multiplicity $m_X$ satisfying
        \begin{align*}
            \sum_{\substack{Y\in \mathcal{L}\\Y\succ X}} m_Y = c_X
        \end{align*}
    where $c_{\supp{x}}  = |\left\{y\in C: y\geqslant x\right\}|$. Alternatively,
        \begin{align*}
            m_X = \sum_{\substack{Y\in \mathcal{L}\\ Y\geq X}}\mu(X, Y)c_Y
        \end{align*}
    where $\mu$ is the M\"{o}bius function of $L$ (see~\cite[Sec. 3.7]{stanley2011enumerative}). We claim that the multiplicity of each eigenvalue $\lambda_T$ is equal to one. To this end, start by assuming that $|T| = {m}$, i.e., $T = {\mathcal{E}}$. Recall that the set of chambers $\mathcal{C}$ of $\mathcal{S}$ can be identified as consisting of edits whose reduced edit enumerations contain simple edits whose edges exhaust $\mathcal{E}$. Thus we have that by~\cref{lem:right-dominance-char} and~\cite[Thm. 1]{brownSemigroupsRingsMarkov2000}, letting $x\in\mathcal{S}$ be any element with $\supp{x} = \mathcal{E}$ (i.e., $x\in\mathcal{C}$),
        \begin{align*}
            c_{\mathcal{E}=\supp{x}} &= |\left\{y\in \mathcal{C}: \simple{y}\supseteq \simple{x}\right\}| = 1.
        \end{align*}
    since the only other chamber whose reduced edit enumeration subsumes that of a chamber $x$ is $x$ itself. Therefore, $m_\mathcal{E}$ satisfies
        \begin{align*}
            1 = \sum_{\substack{T'\in \mathcal{L} \\ T'\supseteq \mathcal{E}}} m_{T'} = m_\mathcal{E}.
        \end{align*}
    Next, assume for some $1\leq k\leq m$ that $m_Q = 1$ holds for each $Q\subseteq\mathcal{E}$ of size $k\leq Q\leq {m}$ and let $T\subseteq {\mathcal{E}}$ be any fixed subset such that $|T| = k-1$. Then we have for any $x\in \mathcal{S}$ such that $\supp x = T$,
        \begin{align*}
            c_T &= |\left\{y\in \mathcal{C}: \simple{y}\supseteq \simple{x}\right\}| =2^{{m} - (k-1)}
        \end{align*}
    since there are ${m} - (k-1)$ edges \emph{not} belonging to $\supp{x}$, and the chambers in question may be enumerated by concatenating to a reduced edit enumeration of $x$ products of simple edits for each of the remaining edges with any choice of sign. Therefore, $m_T$ satisfies
        \begin{align*}
            2^{{m} - (k-1)} = \sum_{\substack{T'\in \mathcal{L} \\ T'\supseteq T}} m_{T'} = m_T +  \sum_{\substack{T'\in \mathcal{L} \\ T'\supsetneq T}} m_{T'}.
        \end{align*}
    By the induction assumption, we have that $m_{T'}=1$ for each $T'\supseteq T$, and therefore it follows that
        \begin{align*}
            m_T +  \sum_{\substack{T'\in \mathcal{L} \\ T'\supsetneq T}} m_{T'} = m_T + 2^{{m} - (k-1)} - 1,
        \end{align*}
    leaving $m_T=1$ as a result. Our claim that $m_T=1$ for each $T$ follows. For the second half of the claim, we can recognize the semigroup random walk on the chambers of $\mathcal{S}$ as the simple edit process introduced in~\cref{sec:basic-properties} by identifying a chamber $c\in\mathcal{C}$ with 
        \begin{align*}
            E_c = \{e\in \mathcal{E} : e^+\in\simple{c}\}\subseteq\mathcal{E}.
        \end{align*}
    Finally, in this case, if $T\subseteq {\mathcal{E}}$, we have by~\cite[Thm. 1]{brownSemigroupsRingsMarkov2000} that
        \begin{align*}
            \lambda_T &= \sum_{\substack{y\in \mathcal{S} \\ \supp{y}\subseteq T}} w_y = \sum_{e\in T} (w_{{e^+}} + w_{{e^-}})\\
            &= \sum_{e\in T} \left\{\frac{p_e}{{m}} + \frac{1-p_e}{{m}}\right\} = \frac{|T|}{{m}}.
        \end{align*}
    The second claim follows.
\end{proof}

Next, we use the eigenvalues of the simple edit process to deduce bounds on its mixing time. We begin by recalling that if $\mu, \nu \in\mathbb{R}^\mathcal{X}$ are probability distributions on a state space $\mathcal{X}$, their \emph{total variation distance} is given by
    \begin{align}\label{eq:total-variation-dist}
        \|\mu - \nu\|_{\mathrm{TV}} &:= \max_{A\subseteq \mathcal{X}} \left|\sum_{x\in A} \mu(x) - \nu(x)\right|=\frac{1}{2}\sum_{x\in \mathcal{X}} |\mu(x) - \nu(x)|.
    \end{align}
If $(X_t)_{t\geq 0}$ is a Markov chain on a finite state space $\mathcal{X}$ with stationary distribution $\pi$, its mixing time is the least time step $t$ after which $\|P^t(y, \cdot) - \pi\|_{\mathrm{TV}}$ is below some fixed cutoff for each $y\in\mathcal{X}$. In the case of a random walk on the chambers of a left regular band, we recall the following specialized result on the rate of convergence to stationarity.

\begin{theorem}[Theorem 0,~\cite{brownSemigroupsRingsMarkov2000}]\label{thm:convergence-brown}
    Let $S$ be a finite LRB with identity and associated lattice $L$. Let $\{w_x\}_{x\in S}$ be a probability distribution on $S$ such that $S$ is generated by $\{x\in S: w_x>0\}$. Then the random walk on the ideal $C$ of chambers of $S$ (defined by the transition probabilities in~\cref{eq:semi-rw}) has a unique stationary distribution $\pi$, and the total variation distance of the walk from stationarity after starting at an inital chamber $x_0\in C$ satisfies, for each $t>0$,
        \begin{align*}
            \|P^t(x_0, \cdot) - \pi\|_{TV} \leq \sum_{X\in L^\ast} m_X\lambda_X^t
        \end{align*}
    where $L^\ast$ consists of each except the maximal element of $L$ and $\lambda_X$ (resp. $m_X$) denotes the eigenvalue (resp. the multiplicity) of the element $X\in L^\ast$, as in~\cite[Thm. 1]{brownSemigroupsRingsMarkov2000}.
\end{theorem}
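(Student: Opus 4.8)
The plan is to give a coupling proof, following Brown and Diaconis~\cite{brown1998random,brownSemigroupsRingsMarkov2000}, since the quantitative rate comes out most cleanly that way. First I would recast the chamber walk as a random product: fix an initial chamber $x_0\in C$ and let $z_1,z_2,\dots$ be i.i.d.\ samples from $w$; by the transition rule~\cref{eq:semi-rw}, the state at time $t$ is $Y_t = z_tz_{t-1}\cdots z_1x_0$. The structural input is a \emph{stabilization} observation: put $A_t := \supp(z_t\cdots z_1) = \supp(z_1)\cup\cdots\cup\supp(z_t)\in L$, the second equality being~\cref{eq:supp-property}. As soon as $A_t$ equals the maximal flat $\hat{1}$ of $L$, the element $z_t\cdots z_1$ has full support and is therefore a chamber, hence left-absorbing; consequently $Y_{t'} = z_{t'}\cdots z_1$ for all $t'\ge t$, in particular no longer depending on $x_0$. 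Define $\tau := \min\{t : A_t = \hat{1}\}$; the hypothesis that $S$ is generated by $\{x : w_x>0\}$ forces $\bigcup_{x : w_x>0}\supp(x) = \hat{1}$, so that $\tau<\infty$ almost surely.

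Next I would couple. Run a second copy $Y_t'$ of the walk, started from $Y_0'\sim\nu$ for an arbitrary distribution $\nu$ on $C$, using the \emph{same} driving sequence $(z_i)$. By stabilization, $Y_{t'} = z_{t'}\cdots z_1 = Y_{t'}'$ for every $t'\ge\tau$, so the two copies coalesce at $\tau$. Applying this with $\nu=\delta_{x_0'}$ for two chambers shows that $P^t(x_0,\cdot)$ and $P^t(x_0',\cdot)$ merge as $t\to\infty$; hence $P^t(x_0,\cdot)$ converges to a limit independent of $x_0$, which one checks is stationary and is therefore the \emph{unique} stationary distribution $\pi$ (this is where the first assertion of the theorem is obtained, without appealing to irreducibility, which the chamber walk generally lacks). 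Taking $\nu=\pi$ and invoking the coupling inequality yields
    \begin{align*}
        \|P^t(x_0,\cdot)-\pi\|_{TV} \;\le\; \pr{\tau>t}.
    \end{align*}

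It remains to bound $\pr{\tau>t}$. The event $\{\tau>t\}$ is exactly $\{A_t\neq\hat{1}\}$, equivalently $A_t\subseteq W$ for some coatom $W$ of $L$. For a fixed flat $W$, independence of the $z_i$ together with~\cref{eq:eigenvalue-01} gives
    \begin{align*}
        \pr{A_t\subseteq W} = \Big(\sum_{y\in S,\ \supp(y)\subseteq W} w_y\Big)^t = \lambda_W^t,
    \end{align*}
so a union bound over the (finitely many) coatoms, each of which lies in $L^\ast$ and carries eigenvalue multiplicity at least one, gives $\pr{\tau>t}\le\sum_{W\text{ coatom}}\lambda_W^t\le\sum_{X\in L^\ast}m_X\lambda_X^t$, which is the stated bound. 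A sharper, exact identity $\pr{\tau>t} = -\sum_{X\in L^\ast}\mu(X,\hat{1})\lambda_X^t$ follows by Möbius inversion of the relations $\lambda_Y^t = \sum_{X\le Y}\pr{A_t = X}$ over $L$, and one recovers the same estimate after comparison with the multiplicity formula of~\cite[Thm.~1]{brownSemigroupsRingsMarkov2000}.

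The steps above are all short once the left regular band machinery of~\cref{sec:lrbs-edit-semigroup} is in place; the only places one must be careful are the stabilization claim (which rests on idempotence, the memorylessness identity~\cref{eq:memorylessness}, and the characterization of chambers) and the treatment of the stationary distribution, where irreducibility is unavailable and one argues convergence directly from the coupling. Since the statement is precisely~\cite[Thm.~0]{brownSemigroupsRingsMarkov2000}, in the present paper it is invoked as a black box.
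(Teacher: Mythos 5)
The paper does not prove this statement at all---it is imported verbatim as Theorem~0 of~\cite{brownSemigroupsRingsMarkov2000} and used as a black box---so any proof you supply is by definition a different route. Your coupling argument is the standard Brown--Diaconis one, and most of it is sound: the random-product representation $Y_t=z_t\cdots z_1x_0$, the stabilization claim (once $\supp(z_t\cdots z_1)=\hat 1$ the product is a chamber and right-absorbs $x_0$), the a.s.\ finiteness of $\tau$ from the generation hypothesis, and the identity $\pr{A_t\leq W}=\lambda_W^t$ are all correct, yielding $\|P^t(x_0,\cdot)-\pi\|_{TV}\leq\pr{\tau>t}\leq\sum_{W\text{ coatom}}\lambda_W^t$. (One small point: coalescence of $P^t(x_0,\cdot)$ and $P^t(x_0',\cdot)$ does not by itself give convergence to a limit; you should either pass to the reversed product $z_1z_2\cdots z_t$, which stabilizes almost surely, or first invoke the existence of \emph{some} stationary $\pi$ on the finite state space $C$ and then apply your coupling bound to it, which simultaneously gives uniqueness.)

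The genuine gap is the final inequality $\sum_{W\text{ coatom}}\lambda_W^t\leq\sum_{X\in L^\ast}m_X\lambda_X^t$, which you justify by asserting that every coatom ``carries eigenvalue multiplicity at least one.'' For a coatom $W$ one has $m_W=c_W-1$ where $c_W=|xC|$ for any $x$ with $\supp x=W$, and this can vanish. Concretely, take $S=\{1,x,c,d\}$ with $c,d$ chambers, $xc=xd=c$, $x^2=x$; this is an LRB generated by $\{x,d\}$, its lattice is the chain $\hat 0<W<\hat 1$ with $W=\supp x$, and $c_W=|\{c\}|=1$, so $m_W=0$ while $\lambda_W=w_x>0$. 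Here $\pr{\tau>t}=w_x^t$ strictly exceeds $\sum_{X\in L^\ast}m_X\lambda_X^t=0$ (the walk in fact mixes exactly in one step), so no route through $\pr{\tau>t}$, including your M\"obius-inversion identity (for which the claimed comparison $-\mu(X,\hat 1)\leq m_X$ also fails, since $-\mu(W,\hat 1)=1>0=m_W$), can deliver the stated bound in full generality. Your argument does prove the theorem whenever every coatom satisfies $c_W\geq 2$; in particular, for the Boolean lattice of the simple edit process (where $m_T=1$ for all $T$) your coatom bound $m(1-1/m)^t$ is actually \emph{sharper} than the bound used in~\cref{thm:mixing-time}, so the paper's applications are unaffected. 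But as a proof of the statement as written, the last step needs either this extra hypothesis or Brown's algebraic argument in place of the union bound.
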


We note that a more general version of this theorem appears in~\cite{bidigare1999combinatorial}, and the result itself appears first in the hyperplane walk literature~\cite{brown1998random}. We can apply this directly to the case of the simple edit process, as follows, to establish that the graph edit process mixes in time near-linear in $m$. We note that~\cref{thm:intro-mixing-time} follows from the result below.

\begin{theorem}\label{thm:mixing-time}
    Let $\mathcal{H} = (\mathcal{V},\mathcal{E})$ be a fixed host graph and let $\mathbf{p}=(p_e)_{e\in\mathcal{E}}$ be fixed with $0<p_e<1$ for each $e\in\mathcal{E}$. Let $(G_t)_{t\geq 0}$ be obtained from the simple edit process with edge probabilities $\mathbf{p}$ and initial state $G_0 = (\mathcal{V}, E_0)$ and let $\pi$ denote the corresponding stationary distribution. Then, for each $t\geq 2m\log{m}$, we have
        \begin{align}\label{eq:mixing-time-1}
            \|\mathsf{P}^t(E_0, \cdot) - \pi\|_{TV}\leq 2m\left(1-\frac{1}{m}\right)^t.
        \end{align}
    In particular, if $c>0$ is given, we have
        \begin{align*}
            \|\mathsf{P}^t(E_0, \cdot) - \pi\|_{TV}\leq e^{-c}
        \end{align*}
    provided $t\geq m(c+2\log{m})$.
\end{theorem}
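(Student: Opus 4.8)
The plan is to specialize Brown's convergence estimate (Theorem 4.4 above) to the eigenvalue data computed in Theorem 4.3. From Theorem 4.3 and Remark 4.2, the non-maximal flats of $\mathcal{L}$ are exactly the proper subsets $T\subsetneq\mathcal{E}$, each contributing multiplicity $m_T=1$ and eigenvalue $\lambda_T=|T|/m$. Therefore Theorem 4.4 gives
    \begin{align*}
        \|\mathsf{P}^t(E_0,\cdot)-\pi\|_{TV}\leq \sum_{T\subsetneq\mathcal{E}} \left(\frac{|T|}{m}\right)^t = \sum_{k=0}^{m-1}\binom{m}{k}\left(\frac{k}{m}\right)^t,
    \end{align*}
where the $t$-th power annihilates the $k=0$ term (for $t>0$). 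First I would bound each binomial coefficient crudely by $\binom{m}{k}\leq m^{m-k}$ (or, more simply, note there are at most $m$ distinct values of $k$ and $\binom{m}{k}\leq 2^m$ is too weak — so the refined count matters), but the cleanest route is: the dominant term is $k=m-1$, giving $\binom{m}{m-1}(1-1/m)^t = m(1-1/m)^t$, and I would show the sum of all remaining terms $k\leq m-2$ is at most $m(1-1/m)^t$ as well, yielding the factor $2m$ in~\cref{eq:mixing-time-1}.

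The key technical step is the tail estimate $\sum_{k=0}^{m-2}\binom{m}{k}(k/m)^t\leq m(1-1/m)^t$. I would prove this by comparing consecutive terms: writing $a_k=\binom{m}{k}(k/m)^t$, the ratio $a_{k-1}/a_k = \frac{k}{m-k+1}\cdot\left(\frac{k-1}{k}\right)^t$ is small once $t$ is at least a modest multiple of $\log m$, since $(1-1/k)^t\leq e^{-t/k}\leq e^{-t/m}$ decays while $\frac{k}{m-k+1}\leq m$. Concretely, for $t\geq 2m\log m$ one gets $a_{k-1}/a_k\leq m\cdot e^{-2\log m}=1/m<1/2$, so the sequence $(a_k)_{k\leq m-1}$ is geometrically decreasing with ratio below $1/2$; hence $\sum_{k=0}^{m-1}a_k\leq 2a_{m-1}=2m(1-1/m)^t$, which is exactly~\cref{eq:mixing-time-1}. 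The main obstacle is making this ratio bound clean and uniform in $k$ — I expect to need the hypothesis $t\geq 2m\log m$ precisely here, which is why it appears as a standing assumption in the statement.

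For the second, cleaner bound, I would start from~\cref{eq:mixing-time-1} and use $1-1/m\leq e^{-1/m}$ to get $2m(1-1/m)^t\leq 2m e^{-t/m}=e^{\log 2+\log m - t/m}$. Setting $t\geq m(c+2\log m)$ makes the exponent at most $\log 2 + \log m - c - 2\log m = \log 2 - \log m - c\leq -c$ for $m\geq 2$ (and the case $m=1$ is trivial since then $\mathcal{E}$ has one edge, $\lambda_\ast=0$, and the walk is stationary after one step), so $\|\mathsf{P}^t(E_0,\cdot)-\pi\|_{TV}\leq e^{-c}$ as claimed. This last part is a routine substitution; the real content is the combinatorial tail bound in the middle paragraph.
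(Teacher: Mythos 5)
Your proposal is correct and follows essentially the same route as the paper's proof: apply Brown's convergence bound with the multiplicity-one eigenvalues $\lambda_T=|T|/m$, show via the consecutive-term ratio $a_{k-1}/a_k\leq m e^{-t/m}\leq 1/2$ (using $t\geq 2m\log m$) that the sum $\sum_k\binom{m}{k}(k/m)^t$ is dominated geometrically by the $k=m-1$ term, and then do a routine substitution for the $e^{-c}$ bound. The only cosmetic difference is in the final step, where you use $1-1/m\leq e^{-1/m}$ directly rather than solving for $t$ via $\log(1-1/m)$ as the paper does; both give the stated threshold $t\geq m(c+2\log m)$.
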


\begin{proof}
    By using~\cref{thm:eigenvalues-state-graph} and~\cref{thm:convergence-brown}, we have
        \begin{align*}
            \|\mathsf{P}^t(E_0, \cdot) - \pi\|_{TV}&\leq \sum_{X\in L^\ast} m_X\lambda_X^t\leq \sum_{k=1}^{m-1}\left(\frac{k}{m}\right)^t{m\choose k}
        \end{align*}
    Let $a_k = \left(\frac{k}{m}\right)^t{m\choose k}$, then we have for $k\geq 1$, 
        \begin{align*}
            \frac{a_k}{a_{k+1}} &= \frac{k^t}{(k+1)^t}\frac{(k+1)}{m-k} \leq m\left(1 - \frac{1}{m}\right)^t\leq m e^{-t/m}
        \end{align*}
    Thus if $t \geq 2m\log{m}$, then $m e^{-t/m}\leq \frac{1}{m} \leq 1/2$. Therefore we have that
        \begin{align*}
            a_k \leq \left(\frac{1}{2}\right)^{m-1-k} a_{m-1},
        \end{align*}
    so that in turn
        \begin{align*}
            \sum_{k=1}^{m-1}\left(\frac{k}{m}\right)^t{m\choose k} &\leq m\left(1-\frac{1}{m}\right)^t\left(\sum_{k=1}^{m-1} \left(\frac{1}{2}\right)^{m-1-k}\right)\\
            &\leq 2m\left(1-\frac{1}{m}\right)^t
        \end{align*}
    and~\cref{eq:mixing-time-1} is proved. Note that
        \begin{align*}
            2m\left(1-\frac{1}{m}\right)^t \leq e^{-c}
        \end{align*}
    holds provided
        \begin{align*}
            t&\geq -\frac{c+\log{2m}}{\log{1-1/m}}.
        \end{align*}
    Since $\log(1+x)\leq x$ for $x>-1$, we have
        \begin{align*}
            -\frac{c+\log{2m}}{\log{1-1/m}} &\leq m(c+\log{2m}) \leq m(c+2\log{m}),
        \end{align*}
    where the last estimate is taken to ensure $t\geq 2m\log{m}$ as well. The theorem is proved.
\end{proof}
	
\section{Compound edit processes and the Moran forest}\label{sec:application-subsemigroups}

In this section we consider the setting of compound edit processes, which were introduced in~\cref{defn:compound-edit-process}. In this setting, the resulting evolving processes are considerably more complex when compared to the simple edit process considered in earlier sections. Nevertheless, our methods can be used to obtain closed-form formulas for the eigenvalues of the transition probability matrices as well as bounds for the corresponding Markov chain mixing times. We consider two examples of compound edit processes which are motivated by previous work in population biology modeling and random graphs, respectively, and compute their eigenvalues. We remark that our methods can be used to analyze further processes such as population dynamic models, birth-death processes, or contact-diffusion models.

Elements of the graph edit semigroup $\mathcal{S}$ are called \emph{compound edits}, which are products of simple edits. Thus compound edit processes can be viewed as random walks on the chambers of subsemigroups of $\mathcal{S}$ which are generated by specified families of compound edits and we may thereby apply much of the setup from~\cref{sec:lrbs-edit-semigroup}. We note that the chambers of semigroups generated by compound edits will form a subset of the chambers of the larger graph edit semigroup. The following lemma characterizes the join semilattice associated to such subsemigroups. 

\begin{lemma}\label{lem:subsemigroup}
    Let $\mathsf{A}\subseteq\mathcal{S}$ be a given set of compound edits belonging to the graph edit semigroup $\mathcal{S}$ and let $\mathcal{T}=\langle \mathsf{A}\rangle$ be the subsemigroup of $\mathcal{S}$ generated by the edits $\mathsf{A}$. Then the join semilattice $\mathcal{L}(\mathcal{T})$ associated to $\mathcal{T}$ (as in~\cref{lem:left-dominance-char}) is generated by the sets $\{\supp(x)\}_{x\in\mathsf{A}}$ and the set union operation.
\end{lemma}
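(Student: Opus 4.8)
The plan is to identify the intrinsic join semilattice $\mathcal{L}(\mathcal{T}) = \mathcal{T}/{\simeq}$ with the image $\supp(\mathcal{T})\subseteq\mathcal{L}$ of $\mathcal{T}$ under the support map, and then to show directly that this image is exactly the closure of $\{\supp(x)\}_{x\in\mathsf{A}}$ under union.

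First I would note that $\mathcal{T}$ is itself a left regular band, since the identities $x^2=x$ and $xyx=xy$ defining an LRB pass to any subsemigroup; hence all of the constructions of \cref{sec:lrbs-edit-semigroup} apply verbatim to $\mathcal{T}$. Because the relation $x\prec y$ (and therefore $\simeq$) is defined purely in terms of the product, the relations $\prec,\simeq$ on $\mathcal{T}$ are simply the restrictions of those on $\mathcal{S}$. By \cref{lem:left-dominance-char}, $x\simeq y$ holds in $\mathcal{S}$ if and only if $\supp(x)=\supp(y)$, so the same characterization holds in $\mathcal{T}$, and therefore $\supp|_{\mathcal{T}}$ descends to a poset isomorphism of $\mathcal{L}(\mathcal{T})$ onto the subposet $\supp(\mathcal{T})$ of $\mathcal{L}$. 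Moreover this isomorphism carries joins to unions: for $a,b\in\mathcal{T}$, \cref{eq:supp-property} gives $\supp(ab)=\supp(a)\cup\supp(b)$ with $ab\in\mathcal{T}$, so $\supp(\mathcal{T})$ is closed under union and the join of $\supp(a)$ and $\supp(b)$ inside it is exactly $\supp(a)\cup\supp(b)$.

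It then remains to compute $\supp(\mathcal{T})$. Every element of $\mathcal{T}$ is a finite product $x_1\cdots x_k$ of generators from $\mathsf{A}$, and iterating \cref{eq:supp-property} gives $\supp(x_1\cdots x_k)=\supp(x_1)\cup\cdots\cup\supp(x_k)$; hence $\supp(\mathcal{T})$ is contained in the family of all finite unions of the sets $\{\supp(x)\}_{x\in\mathsf{A}}$, which is precisely the sub-join-semilattice they generate. Conversely, any such finite union $\supp(x_1)\cup\cdots\cup\supp(x_k)$ equals $\supp(x_1\cdots x_k)$ by the same identity, and $x_1\cdots x_k\in\mathcal{T}$, so the union lies in $\supp(\mathcal{T})$. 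Combining the two inclusions with the identification from the previous paragraph yields the claim.

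I expect the only point requiring care to be the identification in the second paragraph — verifying that the abstract semilattice $\mathcal{L}(\mathcal{T})$ attached to the sub-LRB $\mathcal{T}$ really is the embedded image $\supp(\mathcal{T})$ with its induced order, rather than merely something surjecting onto it — which comes down to invoking \cref{lem:left-dominance-char} for $\mathcal{T}$. Once this is in place, the two inclusions are immediate from the multiplicativity of $\supp$ in \cref{eq:supp-property}. One should also remark that $\mathcal{T}$ need not contain the identity $\widehat{0}$, so $\mathcal{L}(\mathcal{T})$ need not possess a bottom element; this is harmless, since the statement asserts only closure under union.
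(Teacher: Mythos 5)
Your proof is correct and is essentially the argument the paper intends: the paper omits the proof, describing it as "essentially identical" to that of \cref{lem:left-dominance-char}, and your route --- restricting $\prec$ and $\simeq$ to the sub-LRB $\mathcal{T}$, invoking \cref{lem:left-dominance-char} to identify $\mathcal{L}(\mathcal{T})$ with $\supp(\mathcal{T})$ ordered by inclusion, and then using \cref{eq:supp-property} to compute $\supp(\mathcal{T})$ as the union-closure of $\{\supp(x)\}_{x\in\mathsf{A}}$ --- is just the clean formalization of that, with the subtle point (that the intrinsic join in $\mathcal{L}(\mathcal{T})$ agrees with set union because $\supp(\mathcal{T})$ is union-closed) handled properly. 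One minor remark: the paper's convention, made explicit in \cref{thm:eigenvalues-subsemigroup}, is that $\mathcal{T}$ \emph{does} contain the identity edit $\widehat{0}$, which is why the paper's equivalent description of $\mathcal{L}(\mathcal{T})$ includes $\varnothing$; your closing caveat is therefore moot under that convention but does not affect the argument.
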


Equivalently, $\mathcal{L}(\mathcal{T})$ may be described as the smallest subset of $2^\mathcal{E}$ containing the sets $\varnothing, \{\supp(x)\}_{x\in\mathsf{A}}$ and which is closed under the map $X,Y\mapsto X\cup Y$. The proof of this result is essentially identical to the proof of~\cref{lem:left-dominance-char} and is omitted. Next we characterize the eigenvalues of the random walk on the chambers of a subsemigroup $\mathcal{T}$ as in~\cref{lem:subsemigroup}.

\begin{theorem}\label{thm:eigenvalues-subsemigroup}
    Let $\mathcal{H}$ denote a fixed host graph and let $\mathsf{A}\subseteq\mathcal{S}$ be a given set of edits, let $w\in\mathbb{R}^{\mathsf{A}}$ be a given probability distribution on $\mathsf{A}$, and let $\mathcal{T}=\langle \mathsf{A}\rangle$ be the subsemigroup of $\mathcal{S}$ generated by the edits $\mathsf{A}$, and which by convention contains the identity edit $\widehat{0}$. Consider the random walk on the chambers of $\mathcal{T}$ with transition probability matrix $\mathsf{P}$ given in~\cref{eq:semi-rw}. Then $\mathsf{P}$ coincides with the transition probability matrix of the compound edit process obtained from $\mathsf{A}$ with distribution $w$. Moreover, $\mathsf{P}$ is diagonalizable and has eigenvalues indexed by elements $X\in \mathcal{L}(\mathcal{T})$, each of which is identified as a subset of $\mathcal{E}$, with corresponding eigenvalue
        \begin{align*}
            \lambda_X = \sum_{\substack{x\in\mathsf{A} \\ \supp(x) \subseteq X}} w_x.
        \end{align*}
     The multiplicity of $\lambda_X$ depends on the choice of $\mathsf{A}$ and can be computed from the M\"obius function of the join semilattice generated by $\{\supp(x)\}_{x\in\mathsf{A}}$ and set union (see~\cite[Thm. 1]{brownSemigroupsRingsMarkov2000} and~\cite[Sec. 3.7]{stanley2011enumerative}).
\end{theorem}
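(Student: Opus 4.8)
The plan is to recognize $\mathcal{T}$ as a finite left regular band with identity and apply Brown's eigenvalue theorem \cite[Thm. 1]{brownSemigroupsRingsMarkov2000} essentially verbatim, using \cref{lem:subsemigroup} to translate its abstract lattice of flats into the union-closure of $\{\supp(x)\}_{x\in\mathsf{A}}$. First I would record the structure of $\mathcal{T}$. Being a subsemigroup of $\mathcal{S}$, which is an LRB by \cref{lem:lrb-graph-edits}, $\mathcal{T}$ inherits the identities $x^2=x$ and $xyx=xy$ and is therefore itself an LRB; it is finite as a subset of the finite semigroup $\mathcal{S}$, and contains the identity $\widehat{0}$ by convention. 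Re-running the arguments of \cref{lem:left-dominance-char} and \cref{lem:chambers-of-s} inside $\mathcal{T}$ shows that its associated lattice is $\mathcal{L}(\mathcal{T})$ (as described in \cref{lem:subsemigroup}) and that $c\in\mathcal{T}$ is a chamber precisely when $\supp(c)=X_{\max}:=\bigcup_{x\in\mathsf{A}}\supp(x)$, the maximum element of $\mathcal{L}(\mathcal{T})$; write $\mathcal{C}_{\mathcal{T}}$ for this ideal of chambers. Finally, we may assume without loss of generality that $w_x>0$ for every $x\in\mathsf{A}$ (otherwise replace $\mathsf{A}$ by the support of $w$, which changes neither the process nor the conclusion), so that $\mathcal{T}$ is generated by $\{x:w_x>0\}$, as required by Brown's theorem.

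Next I would identify the chamber walk on $\mathcal{C}_{\mathcal{T}}$ with the compound edit process of \cref{defn:compound-edit-process}. Fix an initial state $E_0\subseteq\mathcal{E}$ and consider the map $c\mapsto cE_0$ from $\mathcal{C}_{\mathcal{T}}$ into $2^{\mathcal{E}}$. For $e\in X_{\max}$ any reduced edit enumeration of $c$ contains exactly one of $e^+,e^-$, so membership of $e$ in $cE_0$ is determined by which, independently of $E_0$, while $cE_0$ and $E_0$ agree off $X_{\max}$; hence $cE_0$ recovers $\simple{c}$ and the map is injective. For $x\in\mathsf{A}$ one has $x(cE_0)=(xc)E_0$ with $\supp(xc)=\supp(x)\cup X_{\max}=X_{\max}$, so $xc\in\mathcal{C}_{\mathcal{T}}$; and comparing \cref{eq:semi-rw} with \cref{defn:compound-edit-process}, the transition weight from $c$ to $c'$ equals $\sum_{x\in\mathsf{A},\,xc=c'}w_x$ in both models. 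Since from any $G_0$ the compound edit process reaches $\{cE_0:c\in\mathcal{C}_{\mathcal{T}}\}$ once the supports of the applied edits cover $X_{\max}$ (which happens in finitely many steps almost surely), on its recurrent class the compound edit process has exactly the transition matrix $\mathsf{P}$ of the chamber walk on $\mathcal{T}$.

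The spectral conclusions then follow from \cite[Thm. 1]{brownSemigroupsRingsMarkov2000} applied to $\mathcal{T}$: $\mathsf{P}$ is diagonalizable, its eigenvalues are $\lambda_X=\sum_{y\in\mathcal{T},\,\supp(y)\subseteq X}w_y$ indexed by the flats $X\in\mathcal{L}(\mathcal{T})$, and the multiplicities $m_X$ are recovered from the chamber counts $c_X=|\{c\in\mathcal{C}_{\mathcal{T}}:c\geqslant x\}|$ (for any fixed $x$ with $\supp(x)=X$) by M\"obius inversion over the lattice $\mathcal{L}(\mathcal{T})$. Because $w$ is supported on $\mathsf{A}$, the sum collapses to $\lambda_X=\sum_{x\in\mathsf{A},\,\supp(x)\subseteq X}w_x$, and by \cref{lem:subsemigroup} the index set $\mathcal{L}(\mathcal{T})$ is exactly the family of unions of sets from $\{\supp(x)\}_{x\in\mathsf{A}}$ together with $\varnothing$, with the M\"obius function as claimed.

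I expect the main obstacle to be the identification carried out in the second paragraph: verifying that the purely semigroup-theoretic random walk \cref{eq:semi-rw} on the chambers of $\mathcal{T}$ genuinely is the Markov chain on subgraphs of \cref{defn:compound-edit-process}. The delicate points are that edges outside $X_{\max}$ stay frozen at their $E_0$-values, that $c\mapsto cE_0$ is a bijection onto the recurrent subgraphs (note that \emph{not} every subgraph agreeing with $E_0$ off $X_{\max}$ need be reachable, since only those sign patterns on $X_{\max}$ realizable by products of the generators arise), and that the process is absorbed into this recurrent class after a bounded transient so that the two transition matrices agree where it matters. Once that is settled, the eigenvalue and multiplicity statements are a direct application of \cite[Thm. 1]{brownSemigroupsRingsMarkov2000} together with \cref{lem:subsemigroup}.
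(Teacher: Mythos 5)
Your proposal is correct and follows essentially the same route as the paper, which omits the proof and points to the first half of the proof of \cref{thm:eigenvalues-state-graph}: view $\mathcal{T}$ as a finite sub-LRB of $\mathcal{S}$ with lattice $\mathcal{L}(\mathcal{T})$ given by \cref{lem:subsemigroup}, identify the chamber walk with the compound edit process, and apply \cite[Thm.~1]{brownSemigroupsRingsMarkov2000}. Your second paragraph is in fact more careful than the paper about the identification of the chamber walk with the process (in particular when $\bigcup_{x\in\mathsf{A}}\supp(x)\neq\mathcal{E}$, so edges outside the maximal flat stay frozen), but this is an elaboration of the same argument rather than a different one.
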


We note that the number of eigenvalues of $\mathsf{P}$ and their multiplicities depend significantly on the choice of generating family $\mathsf{A}$ and thus do not admit a general closed-form enumeration. The proof is essentially the same as the first half of that of~\cref{thm:eigenvalues-state-graph} and is omitted.

By~\cref{thm:convergence-brown}, we know that the random walk on the chambers of $\mathcal{T}$ will be ergodic and its mixing time can be bounded by the eigenvalues $\lambda_X$ of $\mathsf{P}$. Without the multiplicities of the eigenvalues, our mixing time bounds (cf.~\cref{thm:mixing-time}) are different, but we can still guarantee a mixing time which is polynomial in $m$ provided each of the weights $w_x$ in~\cref{thm:eigenvalues-subsemigroup} are bounded from below by the reciprocal of a polynomial in $m$. We make this precise below.

\begin{theorem}\label{thm:mixing-time-subsemigroup}
    Let $\mathsf{A}\subseteq\mathcal{S}$ be a given family of edits, let $\mathcal{T}=\langle \mathsf{A}\rangle$ be the subsemigroup of $\mathcal{S}$ generated by $\mathsf{A}$, let $w\in\mathbb{R}^{\mathsf{A}}$ be a given probability distribution on $\mathsf{A}$, and let
        \begin{align*}
            \lambda_\ast = \sup_{ \substack{X\in\mathcal{L}(\mathcal{T}) \\ X\neq \mathcal{E}}} \lambda_X
        \end{align*}
    as in~\cref{thm:eigenvalues-subsemigroup}. Consider the compound edit process $(G_t)_{t\geq 0}$ obtained from $\mathsf{A}$ with distribution $w$, and let $\mathsf{P}$ denote its transition probability matrix and let $\pi$ denote its stationary distribution. Then for any initial state $G_0 = (\mathcal{V}, E_0)$ belonging to the chambers of $\mathcal{T}$ and $c>0$, we have
        \begin{align}\label{eq:mixing-time-bound-compound}
            \|\mathsf{P}^t(x_0, \cdot) - \pi\|_{TV}\leq e^{-c}
        \end{align}
    provided $t\geq \frac{m\log{2} + c}{1-\lambda_\ast}$.
\end{theorem}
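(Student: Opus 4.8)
The plan is to bound the total variation distance directly using Theorem~\ref{thm:convergence-brown} and then control the polynomial prefactor coming from the eigenvalue multiplicities by the crude estimate that the total multiplicity is at most the number of chambers, which in turn is at most $2^m$. The key observation is that although the individual multiplicities $m_X$ are not known in closed form, they are nonnegative integers and their weighted sum $\sum_{X\in L^\ast} m_X \lambda_X^t$ can be bounded above by $\lambda_\ast^t \sum_{X\in L^\ast} m_X$, since every eigenvalue in $L^\ast$ is at most $\lambda_\ast$ by definition.

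First I would invoke Theorem~\ref{thm:convergence-brown} (which applies since $\mathcal{T}$ is a finite LRB with identity $\widehat 0$ and is generated by $\{x : w_x > 0\}\subseteq \mathsf{A}$, the latter being the hypothesis implicitly in force) to get
\begin{align*}
\|\mathsf{P}^t(x_0,\cdot) - \pi\|_{TV} \leq \sum_{X\in L^\ast(\mathcal{T})} m_X \lambda_X^t \leq \lambda_\ast^t \sum_{X\in L^\ast(\mathcal{T})} m_X.
\end{align*}
Next I would note that $\sum_{X\in \mathcal{L}(\mathcal{T})} m_X$ equals the total number of chambers of $\mathcal{T}$ (this follows from the defining relation $\sum_{Y\succeq X} m_Y = c_X$ of Brown's multiplicities applied at the minimal flat $X = \varnothing$, where $c_\varnothing$ counts all chambers above the identity, i.e.\ all chambers), and that the chambers of $\mathcal{T}$ form a subset of the chambers of $\mathcal{S}$, which by Lemma~\ref{lem:chambers-of-s} are in bijection with $2^{\mathcal{E}}$ and hence number $2^m$. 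Therefore $\sum_{X\in L^\ast(\mathcal{T})} m_X \leq 2^m$, giving
\begin{align*}
\|\mathsf{P}^t(x_0,\cdot) - \pi\|_{TV} \leq 2^m \lambda_\ast^t.
\end{align*}

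Finally I would solve for $t$: using $\lambda_\ast^t = e^{t\log\lambda_\ast} \leq e^{-t(1-\lambda_\ast)}$ (from $\log\lambda_\ast \leq \lambda_\ast - 1 \leq -(1-\lambda_\ast)$, valid since $0\le\lambda_\ast<1$), the bound $2^m e^{-t(1-\lambda_\ast)} \leq e^{-c}$ holds as soon as $m\log 2 - t(1-\lambda_\ast) \leq -c$, i.e.\ $t \geq \frac{m\log 2 + c}{1-\lambda_\ast}$, which is exactly the claimed threshold. One should also record that $\lambda_\ast < 1$ strictly: the only way a flat $X\neq\mathcal{E}$ could have $\lambda_X = 1$ is if $\supp(x)\subseteq X$ for every $x\in\mathsf{A}$ with $w_x>0$, but then $\mathsf{A}$ could not generate a subsemigroup with a chamber (support $\mathcal{E}$), contradicting ergodicity; so the denominator is positive and the bound is meaningful.

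The main obstacle is making precise the claim that $\sum_X m_X$ counts the chambers of $\mathcal{T}$ and that these inject into $2^{\mathcal{E}}$ — this is where the structure of $\mathcal{T}$ as a subsemigroup of $\mathcal{S}$ (Lemma~\ref{lem:subsemigroup}, Remark~\ref{rmk:quotient-map}) matters, since one must verify that a chamber of $\mathcal{T}$ really does have $\mathcal{T}$-support equal to all of $\mathcal{E}$ and thus corresponds to an honest subgraph. Everything else is a routine manipulation of inequalities; the only subtlety is the a priori possibility $\lambda_\ast = 1$, which must be excluded as above for the statement to make sense, though it is arguably subsumed in the standing ergodicity hypothesis.
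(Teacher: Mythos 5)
Your proposal is correct and follows essentially the same route as the paper's proof: invoke Theorem~\ref{thm:convergence-brown}, bound the total multiplicity $\sum_X m_X$ by the number of chambers of $\mathcal{T}$, which is at most $2^m$, and then convert $2^m\lambda_\ast^t\leq e^{-c}$ into the stated threshold via $\log\lambda_\ast\leq -(1-\lambda_\ast)$. Your added justifications — that $\sum_X m_X = c_{\varnothing}$ counts the chambers by Brown's defining relation at the minimal flat, and that $\lambda_\ast<1$ must hold under the ergodicity hypothesis — are points the paper leaves implicit, and they make the argument more complete without changing its substance.
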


\begin{proof}
    By~\cref{thm:convergence-brown}, we have that
        \begin{align*}
            \|\mathsf{P}^t(x_0, \cdot) - \pi\|_{TV} \leq \sum_{X\in L^\ast} m_X\lambda_X^t.
        \end{align*}
    We bound the right-hand side as follows. First, the largest eigenvalue $\lambda_X$ must be at most $\lambda_\ast$; and second, the number of chambers of $\mathcal{T}$ must be at most the number of chambers of the entire graph edit semigroup $\mathcal{S}$, which is $2^m$. Thus
        \begin{align*}
            \sum_{X\in L^\ast} m_X\lambda_X^t &\leq 2^m\,\lambda_\ast ^t.
        \end{align*}
    Therefore, by straightforward manipulation, we have that $2^m\,\lambda_\ast ^t\leq e^{-c}$ if and only if 
        \begin{align*}
            t\geq -\frac{m\log{2} + c}{\log(\lambda_\ast)},
        \end{align*}
    and thus by using the bound $\log{1-x}\leq -x$ for $x>-1$, we have that
        \begin{align*}
            -\frac{m\log{2} + c}{\log(\lambda_\ast)} &\leq \frac{m\log{2} + c}{1-\lambda_\ast},
        \end{align*}
    from which the claim follows.
\end{proof}

\begin{remark}\label{rmk:sharpen-mixing-time-chambers}
    We remark that if the cardinality of the set of chambers of the semigroup $\mathcal{T}$ in~\cref{thm:mixing-time-subsemigroup} can be bounded from above by $M\geq 0$,~\cref{eq:mixing-time-bound-compound} can be sharpened to $t\geq \frac{\log{M} + c}{1-\lambda_\ast}$.
\end{remark}

To conclude this section we consider two examples of compound edit processes, the Moran forest model and a dynamic random intersection graph model. 

\begin{figure}[t]
    \centering
    \includegraphics[width=0.75\linewidth]{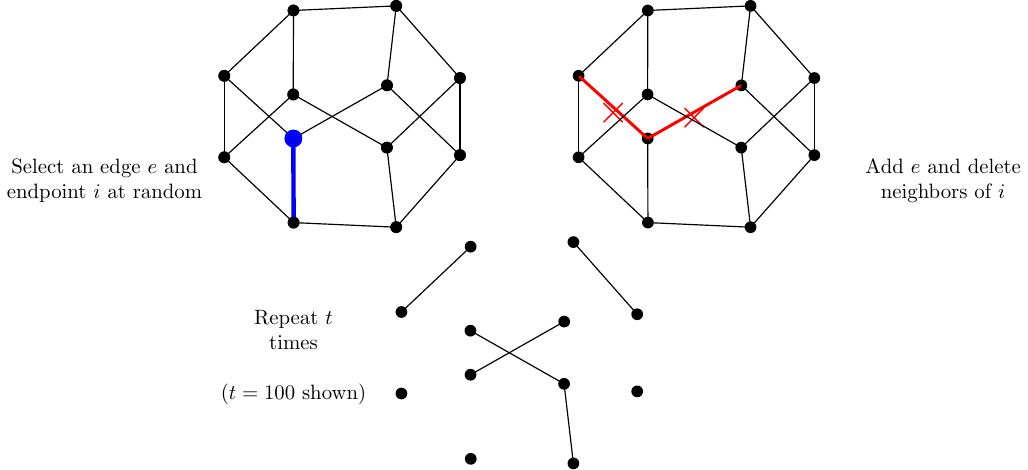}
    \caption{An illustration of the Moran edit process with host graph $\mathcal{H}$ on twelve vertices.}\label{fig:moran}
\end{figure}

\begin{example}[The Moran forest model]\label{ex:moran-forest}\normalfont
	Let $\mathcal{H}=(\mathcal{V},\mathcal{E})$ be a given host graph and set $G_0=\mathcal{H}$. For $t\geq 0$, sample an edge $e=\{u, v\}\in \mathcal{E}$ uniformly at random and an endpoint $x$ uniformly at random from $e$. Let $G_{t+1} = (\mathcal{V}, E_{t+1})$ be obtained by deleting all edges from $E_{t}$ which are incident to the node $x$ and adding the edge $e$ to $E_t$. If $(G_t)_{t\geq 0}$ is a sequence of random graphs obtained from this model, we say $(G_t)_{t\geq 0}$ is obtained from the Moran edit process. The stationary distribution $\pi$ obtained from $\mathcal{H} = K_n$ is known as the \emph{Moran forest model}~\cite{bienvenuMoranForest2021}. We illustrate an example of this Markov chain in~\cref{fig:moran}.

    It can be shown without any semigroup theory that this process is ergodic and thus has a unique stationary distribution (see, e.g.,~\cite{bienvenuMoranForest2021}); it has been used as a model for evolutionary processes and in particular can be used to model the family structure of a random population obtained from the Moran model (see~\cite{durrett2008probability}). The state graph associated to this walk in the case where $\mathcal{H}=K_4$ is shown in~\cref{fig:state-graph-moran}.

	We can recast this process as a particular instance of an edit semigroup random walk as follows. We begin by letting $\mathcal{E}' = \{(u, v), (v, u) : \{u, v\}\in \mathcal{E}\}$ denote the oriented edges of the host graph $\mathcal{H}$. For each $(u, v)\in \mathcal{E}'$, write
		\begin{align*}
			y_{(u, v)} = \{u, v\}^+ \left(\prod_{\substack{e\in \mathcal{E} \\ u\in e}} e^{-}\right),
		\end{align*}
	where we use the notation $\{u, v\}^+$ to refer to the edit which adds the edge $\{u, v\}$ to a given subgraph. We define
		\begin{align*}
			\mathsf{B} = \{y_{(u, v)} : (u, v) \in \mathcal{E}'\},
		\end{align*}
	and denote by $\mathcal{M}=\langle \mathsf{B}\rangle$ the semigroup which is generated by $\mathsf{B}$. In the setup of~\cref{thm:eigenvalues-subsemigroup}, by defining for each $v\in \mathcal{V}$ the set $N(v):=\{e\in \mathcal{E} : v\in e\}$, it follows by~\cref{lem:subsemigroup} that the join semilattice $\mathcal{L}(\mathcal{M})$ is the smallest subset of $2^\mathcal{E}$ containing the neighborhoods $\{N(v)\}_{v\in\mathcal{V}}$ and $\varnothing$ and which is closed under set unions. Moreover, by~\cref{thm:eigenvalues-subsemigroup}, it follows that the eigenvalues $\lambda_X$ of the transition probability matrix $\mathsf{P}$ of the Moran edit process are indexed by subsets $X\in \mathcal{L}(\mathcal{M})$ and are given by
        \begin{align*}
            \lambda_X = \sum_{\substack{v\in \mathcal{V} \\ N(v)\subseteq X}}\frac{d_v}{2 m},
        \end{align*}
    where $d_v$ denotes the degree of vertex $v\in\mathcal{V}$ in the host graph. It follows that the second largest eigenvalue of $\mathsf{P}$ is at most $1-\frac{\delta(\mathcal{H})}{m}$ where $\delta(\mathcal{H})$ is the minimum vertex degree of $\mathcal{H}$. By~\cref{thm:mixing-time-subsemigroup}, 
        \begin{align*}
            \|\mathsf{P}^t(G_0 = \mathcal{H}, \cdot) - \pi\|_{TV}\leq e^{-c}
        \end{align*}
    holds provided
        \begin{align}\label{eq:moran-forest-mixing-time}
            t\geq \frac{m^2\log(2) + cm}{\delta(\mathcal{H})}.
        \end{align}
    In the case where $\mathcal{H} = K_n$ for some $n\geq 2$, by~\cref{rmk:sharpen-mixing-time-chambers},~\cref{eq:moran-forest-mixing-time} can be sharpened using the bound $F_n\leq (n-1)^{n} \leq e^{n\log n}$ where $F_n$ is the number of spanning forests of $K_n$ (see~\cite[Thm. 1.4]{bencs2023upper}), so that it suffices to pick
        \begin{align}\label{eq:moran-forest-mixing-time-Kn}
            t\geq \frac{n^2\log{n} + cn}{2}.
        \end{align}
\end{example}

\begin{example}[Dynamic random intersection graph]\label{ex:random-intersection}\normalfont
    Consider a set of $n\geq 1$ ground symbols $\Omega = \{1, 2,\dotsc, n\}$. An \emph{intersection graph} has vertex set $V$ which is a family of subsets of $\Omega$ and edges $\{u, v\}$ whenever $u\cap v \neq\varnothing$. An intersection graph is also understood via its bipartite incidence graph, which consists of vertex set $\Omega \cup V$ with edges $\{v, B\}$ for $v\in \Omega$ and $B\in V$ present whenever $v\in B$.
    
    Various examples of random intersection graphs have appeared in the literature dating back to the 1996 thesis of Singer~\cite{singer1996random}. Here we consider a dynamic random intersection graph model (not to be confused with~\cite{milewska2025dynamic}) with stationary distribution equal to the random intersection graph model of Godehardt and Jaworski~\cite{godehardt2001two}, as follows.
    
    Let $\Omega' = \{1, 2,\dotsc, N\}$ for $N\geq 1$ be fixed. Let the host graph $\mathcal{H}$ be the complete bipartite graph on $\Omega\cup \Omega'$. For $v\in \Omega$ and $A\subseteq \Omega'$ fixed, define the compound graph edit
        \begin{align*}
            y_{v, A} &= \prod_{\substack{u\in\Omega' \\ u\in A}} \{u, v\}^+ \prod_{\substack{u\in\Omega' \\ u\notin A}} \{u, v\}^-
        \end{align*}
    That is, $y_{v, A}$ is the edit which assigns the neighborhood of $v$ to consist exactly of the elements of $A$. Note that the support of $y_{v, A}$ consists of the $N$ edges in $\mathcal{H}$ with endpoint $v$. We consider the set of compound edits $\mathsf{B} = \{y_{v, A}\}_{v\in \Omega, A\subseteq\Omega'}$. 

    Next, we fix a probability distribution $\mu\in\mathbb{R}^{N+1}$ on the set $\{0, 1,\dotsc, N\}$ which are understood as the possible cardinalities of neighborhoods of vertices in $\Omega$. Define a probability distribution $w$ on $\mathsf{B}$ as follows:
        \begin{align*}
            w({y_{v, A}}) &= \frac{1}{n}\frac{\mu(|A|)}{\binom{N}{|A|}},\quad v\in\Omega,\;A\subseteq\Omega'.
        \end{align*}
    That is, $w_{v, A}$ may be understood as the measure of a uniformly selected vertex $v$ and a subset $A$ sampled first by its cardinality according to $\mu$ and then uniformly among all subsets of $\Omega'$ with corresponding cardinality. The compound edit process on the host graph $\mathcal{H}$ with edits $\mathsf{B}$ and distribution $w$ can be thought of as iteratively selecting a vertex $v\in\Omega$ at random and then resetting its neighbors in $\Omega'$ according to the update rule given by $\mu$. It is straightforward to verify that its stationary distribution agrees with the random intersection graph model defined by $n, N, \mu$ (see~\cite{godehardt2001two}). 

    Assuming $\mu$ has full support, the set of chambers of the associated subsemigroup $\mathcal{I} = \langle \mathsf{B}\rangle$ is the same as the entire edit semigroup and has cardinality $2^{nN}$. Moreover, the eigenvalues of this walk are indexed by elements of the join semilattice generated by the edge neighborhoods of each vertex $v\in\Omega$. Since these sets are in fact disjoint, for each subset $B\subseteq \Omega$, there is an eigenvalue
        \begin{align*}
            \lambda_B = \sum_{v\in B} \sum_{\substack{y\in\mathsf{B} \\ \supp(x) = N(v)}} w(y) = \frac{|B|}{n},
        \end{align*}
    so that the spectral gap is $1/n$ independent of the choice of $\mu$. By~\cref{thm:mixing-time-subsemigroup} and~\cref{rmk:sharpen-mixing-time-chambers}, letting $\mathsf{P}$ (resp. $\pi$) denote the transition probability matrix (resp. stationary distribution) for the compound edit process, for any given initial state $G_0 = (\Omega\cup\Omega', E_0)$,
        \begin{align*}
            \|\mathsf{P}^t(E_0, \cdot) - \pi\|_{TV}\leq e^{-c}
        \end{align*}
    holds provided $t\geq Nn^2\log{2} + cn$.
\end{example}

\section{Eigenvectors of the graph edit Markov chain and commute times}\label{sec:eigenvectors}

In this section we obtain closed-form expressions for each of the eigenvectors of the transition probability matrix $\mathsf{P}$ of the simple edit process. The main tool utilized in discovering the eigenvectors comes from the recursive procedure introduced by Saliola in the paper~\cite{saliola2012eigenvectors}; we mention this context for completeness, although the argument presented herein is self-contained. 

\begin{theorem}\label{thm:eigenvectors}
    Let $\mathcal{H}=(\mathcal{V}, \mathcal{E})$ be a given host graph and let $\mathsf{P}$ denote the transition probability matrix of the simple edit process with edge probabilities $\mathbf{p} = (p_e)_{e\in\mathcal{E}}$. Letting $T\subseteq\mathcal{E}$ be any subset of edges, define the row vector $\phi_T\in\mathbb{R}^{2^\mathcal{E}}$ componentwise via the formula
        \begin{align}\label{eq:eigenvector-defn}
            \phi_T(E) &= (-1)^{|\mathcal{E}\setminus(E\cup T)|}\prod_{e\in T\cap E}p_e \prod_{e\in T\setminus E}(1-p_e),\quad E\subseteq \mathcal{E}.
        \end{align}
    Then $\phi_T$ satisfies $\phi_T \mathsf{P} = \frac{|T|}{m}\phi_T$.
\end{theorem}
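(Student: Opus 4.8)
The plan is to verify the identity $\phi_T\mathsf{P}=\tfrac{|T|}{m}\phi_T$ one coordinate at a time. Fix $F\subseteq\mathcal{E}$ and expand $(\phi_T\mathsf{P})(F)=\sum_{E\subseteq\mathcal{E}}\phi_T(E)\mathsf{P}_{E,F}$. Exactly as in the proof of~\cref{lem:ergodicity-of-gemc}, only three types of $E$ contribute to this sum: $E=F$, contributing the diagonal term $\phi_T(F)\mathsf{P}_{F,F}$ with $\mathsf{P}_{F,F}=\sum_{e\in F}\tfrac{p_e}{m}+\sum_{e\notin F}\tfrac{1-p_e}{m}$; the sets $E=F\setminus e$ for $e\in F$, contributing $\phi_T(F\setminus e)\tfrac{p_e}{m}$; and the sets $E=F\cup e$ for $e\notin F$, contributing $\phi_T(F\cup e)\tfrac{1-p_e}{m}$. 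Thus everything reduces to understanding how the scalar $\phi_T$ transforms under a single edge flip.

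The heart of the argument is a short list of four elementary identities read off from~\cref{eq:eigenvector-defn}. Since the products $\prod_{e\in T\cap E}p_e\prod_{e\in T\setminus E}(1-p_e)$ only involve edges of $T$ while the sign $(-1)^{|\mathcal{E}\setminus(E\cup T)|}$ only involves edges outside $T$, flipping a single edge $e$ yields: if $e\in T\cap F$ then $\phi_T(F\setminus e)=\tfrac{1-p_e}{p_e}\phi_T(F)$; if $e\in T\setminus F$ then $\phi_T(F\cup e)=\tfrac{p_e}{1-p_e}\phi_T(F)$; if $e\in F\setminus T$ then $\phi_T(F\setminus e)=-\phi_T(F)$; and if $e\notin F\cup T$ then $\phi_T(F\cup e)=-\phi_T(F)$. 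Substituting these into the expansion of $(\phi_T\mathsf{P})(F)$ and grouping all sums according to the partition $\mathcal{E}=(F\cap T)\sqcup(F\setminus T)\sqcup(T\setminus F)\sqcup(\mathcal{E}\setminus(F\cup T))$, the $(F\setminus T)$ and $(\mathcal{E}\setminus(F\cup T))$ contributions from the off-diagonal terms exactly cancel the corresponding parts of the diagonal term $\mathsf{P}_{F,F}$, while the $(F\cap T)$ and $(T\setminus F)$ parts collapse using $p_e+(1-p_e)=1$ to leave $\tfrac{|F\cap T|+|T\setminus F|}{m}\phi_T(F)=\tfrac{|T|}{m}\phi_T(F)$. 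This is the claimed eigenvalue.

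The conceptual reason behind this bookkeeping, which can also be used to organize the write-up, is that $\mathsf{P}=\tfrac1m\sum_{e\in\mathcal{E}}\mathsf{P}_e$ where $\mathsf{P}_e$ is the Markov kernel that resamples the presence of edge $e$ while leaving all other edges untouched; the $\mathsf{P}_e$ are commuting idempotents. Under the identification $2^{\mathcal{E}}\cong\{0,1\}^{\mathcal{E}}$, the row vector $\phi_T$ factors as a tensor product of the vectors $(1-p_e,p_e)$ over $e\in T$ and the zero-sum vectors $(-1,1)$ over $e\notin T$; since $\mathsf{P}_e$ sends any vector $v$ in the $e$-th factor to $\bigl(\sum_i v_i\bigr)(1-p_e,p_e)$, one gets $\phi_T\mathsf{P}_e=\phi_T$ when $e\in T$ and $\phi_T\mathsf{P}_e=0$ when $e\notin T$, so that $\phi_T\mathsf{P}=\tfrac{|T|}{m}\phi_T$. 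No step here is a genuine obstacle; the only care needed is in tracking the sign factor $(-1)^{|\mathcal{E}\setminus(E\cup T)|}$ through the edge flips and in consistently using the left (row-vector) action convention. Optionally one can finish by noting that the matrix $(\phi_T(E))_{T,E}$ is invertible (it has a triangular tensor-product structure), which confirms these $2^m$ vectors form the complete eigenbasis promised by~\cref{thm:eigenvalues-state-graph}.
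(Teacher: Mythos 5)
Your proof is correct, and it takes a genuinely different route from the paper. The paper proves the statement by downward induction on $|T|$: the base case $|T|=|\mathcal{E}|$ is the stationarity computation $\phi_{\mathcal{E}}=\pi$ from~\cref{lem:ergodicity-of-gemc}, and the inductive step passes from $T^\ast=T\cup\{t_\ast\}$ to $T$ via the ratio identities relating $\phi_T$ to $\phi_{T^\ast}$. You instead verify the eigenvector equation directly, coordinate by coordinate, using the four single-edge-flip identities for $\phi_T$ (all four of which I checked and which follow immediately from the fact that the $p$-product in~\cref{eq:eigenvector-defn} sees only edges of $T$ while the sign sees only edges outside $T$); the cancellation against the diagonal term $\mathsf{P}_{FF}$ works exactly as you describe and yields $(|F\cap T|+|T\setminus F|)/m=|T|/m$. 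Your tensor-product reformulation is also sound: writing $\mathsf{P}=\frac1m\sum_e\mathsf{P}_e$ with $\mathsf{P}_e$ the rank-one kernel resampling edge $e$ to Bernoulli$(p_e)$, the factor of $\phi_T$ at coordinate $e$ is $(1-p_e,p_e)$ (sum $1$, fixed by $\mathsf{P}_e$) for $e\in T$ and $(-1,1)$ (sum $0$, killed by $\mathsf{P}_e$) for $e\notin T$, so $\phi_T\mathsf{P}=\frac{|T|}{m}\phi_T$ falls out in one line. What your approach buys is a self-contained argument that does not presuppose the stationarity lemma, makes the product-chain structure of the walk explicit, and gives completeness of the eigenbasis for free, since the matrix $(\phi_T(E))_{T,E}$ is a Kronecker product of $2\times2$ blocks each of determinant $-1$; what the paper's induction buys is consistency with the recursive scheme of Saliola~\cite{saliola2012eigenvectors} that the authors cite as the source of the formula. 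The only point to be careful about, which you flag yourself, is the row-versus-column convention in $\mathsf{P}_{E,F}$ (probability of moving \emph{from} $E$ \emph{to} $F$), and your expansion uses it correctly.
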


\begin{proof}
    We can prove this using an inductive argument on the size of the subset $T$. For $|T| = |\mathcal{E}|$ we know that the claim is true since $\phi_\mathcal{E} = \pi$ as in~\cref{lem:ergodicity-of-gemc}. Assume the claim is true for each such $T$ with $|T|\geq k$ where $1\leq k \leq |\mathcal{E}|$, and fix a subset $T\subseteq \mathcal{E}$ with $|T| = k-1$. We write $T = \{t_1,\dotsc, t_{k-1}\}$ for $t_i\in\mathcal{E}$ and put
        \begin{align*}
            T^\ast = \{t_1,\dotsc, t_{k-1}, t_\ast\}
        \end{align*}
    where $t_\ast\in\mathcal{E}$ is any element which does not originally appear in $T$. Thus $T = T^\ast\setminus\{t_\ast\}$ and for each $E\subseteq\mathcal{E}$ it holds
        \begin{align}\label{eq:phi-tstar-defn}
            (\phi_{T^\ast} \mathsf{P})(G) &=\frac{|T^\ast|}{m}\phi_{T^\ast}(G).
        \end{align}
    Moreover, we have upon inspection that
        \begin{align}\label{eq:phi-tstar-prop2}
            \phi_T(E) &= \begin{cases}
                \frac{1}{p_{t_\ast}} \phi_{T^\ast}(E) &\text{ if }t_\ast\in E\\
                -\frac{1}{1-p_{t_\ast}}\phi_{T^\ast}(E) &\text{ if }t_\ast\notin E\\
            \end{cases}.
        \end{align}
    Now fix any $E\subseteq\mathcal{E}$ and assume $t_\ast\in E$. We observe that 
        \begin{align}\label{eq:phi-tstar-prop3}
            \phi_{T^\ast}(t_\ast^{-}E) &= \frac{1-p_{t_\ast}}{p_{t_\ast}}\phi_{T^\ast}(E).
        \end{align}
    We can proceed with the calculation of $\phi_{T}\mathsf{P}$ (using an argument along the lines of that in the proof of~\cref{lem:ergodicity-of-gemc}) as follows:
        \begin{align*}
            (\phi_T \mathsf{P})(E) &=\sum_{e\in E}\left\{\phi_T(e^- E)\frac{p_e}{m} + \phi_T(E)\frac{p_e}{m}\right\} \\
            &\quad+ \sum_{e\notin E} \left\{\phi_T(e^+ E)\frac{1-p_e}{m} + \phi_T(E)\frac{1-p_e}{m}\right\}\\
            &=\frac{1}{p_{t_\ast}}\biggl [\sum_{e\in E} \left\{\phi_{T^\ast}(e^- E)\frac{p_e}{m} + \phi_{T^\ast}(E)\frac{p_e}{m}\right\}  \\
            &\quad + \sum_{e\notin E} \left\{\phi_{T^\ast}(e^+ E)\frac{1-p_e}{m} + \phi_{T^\ast}(E)\frac{1-p_e}{m}\right\}\biggr ] \\
            &\quad + \phi_{T}(t_\ast^- E)\frac{p_{t_\ast}}{m} - \frac{\phi_{T^\ast}(t_\ast^- E)}{p_{t_\ast}} \frac{p_{t_\ast}}{m}\\
            &= \frac{|T^\ast|}{m p_{t_\ast}}\phi_{T^\ast}(E)+ \phi_{T}(t_\ast^- E)\frac{p_{t_\ast}}{m} - \frac{\phi_{T^\ast}(t_\ast^- E)}{p_{t_\ast}} \frac{p_{t_\ast}}{m}
        \end{align*}
    using~\cref{eq:phi-tstar-defn} and~\cref{eq:phi-tstar-prop2}. Using ~\cref{eq:phi-tstar-prop2} again and~\cref{eq:phi-tstar-prop3}, it follows that
        \begin{align*}
            &\frac{|T^\ast|}{m p_{t_\ast}}\phi_{T^\ast}(E)+ \phi_{T}(t_\ast^- E)\frac{p_{t_\ast}}{m} - \frac{\phi_{T^\ast}(t_\ast^- E)}{p_{t_\ast}} \frac{p_{t_\ast}}{m}\\
            &\quad=\frac{|T^\ast|}{m p_{t_\ast}}\phi_{T^\ast}(E)- \frac{\phi_{T^\ast}(E)}{1-p_{t_\ast}}\frac{1-p_{t_\ast}}{p_{t_\ast}} \frac{p_{t_\ast}}{m}\\
            &\quad\quad- \frac{\phi_{T^\ast}(E)}{p_{t_\ast}} \frac{p_{t_\ast}}{m} \frac{1-p_{t_\ast}}{p_{t_\ast}}\\
            &\quad= \frac{\phi_{T^\ast}(E)}{p_{t_\ast}}\left\{\frac{|T^\ast| - p_{t_\ast} - 1 + p_{t_\ast}}{m}\right\}\\
            &\quad= \phi_{T}(E)\frac{|T^\ast| - 1}{m}  = \phi_T(E) \frac{|T|}{m}
        \end{align*}
    as claimed. The case $t\notin E$ follows similarly, and thus by induction, the claim follows.
\end{proof}

The next result is a technical step which confirms that the family $\phi_T$ exhibited in~\cref{thm:eigenvectors}, when suitably normalized, forms an orthonormal family in $\mathbb{R}^{2^\mathcal{E}}$.

\begin{theorem}\label{thm:orthonormality}
    Let $\mathcal{H}=(\mathcal{V}, \mathcal{E})$ be a given host graph and let $\mathsf{P}$ (resp. $\pi$) denote the transition probability matrix (resp. stationary distribution) of the simple edit process with edge probabilities $\mathbf{p} = (p_e)_{e\in\mathcal{E}}$. For each $T\subseteq\mathcal{E}$, let $\phi_T$ denote the correspondingly indexed eigenvector of $\mathsf{P}$ as in~\cref{eq:eigenvector-defn}. Let $\psi_T\in\mathbb{R}^{2^\mathcal{E}}$ be given by the formula
        \begin{align*}
            \psi_T(E) &=\phi_T(E)  \frac{\prod_{e\notin T} \sqrt{p_e(1-p_e)}}{\sqrt{\pi(E)}},\quad E\subseteq\mathcal{E}.
        \end{align*}
    Then $\{\psi_T\}_{T\subseteq \mathcal{E}}$ form an orthonormal system of (left) eigenvectors for the symmetric matrix $\mathsf{Q} = \Pi^{1/2}\mathsf{P}\Pi^{-1/2}$, where $\Pi = \mathrm{diag}(\pi(E))_{E\subseteq\mathcal{E}}$.
\end{theorem}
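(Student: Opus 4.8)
The plan is to verify orthonormality directly from the explicit formula for $\phi_T$, reducing everything to a product over edges. First I would observe that since $\mathsf{Q} = \Pi^{1/2}\mathsf{P}\Pi^{-1/2}$ is a similarity transform of $\mathsf{P}$, and $\phi_T\mathsf{P} = \frac{|T|}{m}\phi_T$ by \cref{thm:eigenvectors}, the vector $\psi_T$ defined by $\psi_T(E) = \phi_T(E)\sqrt{\pi(E)}^{-1}\prod_{e\notin T}\sqrt{p_e(1-p_e)}$ is (up to the constant factor, which does not affect the eigenvector property) just $\phi_T\Pi^{-1/2}$, hence a left eigenvector of $\mathsf{Q}$ with the same eigenvalue $\frac{|T|}{m}$. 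The constant $\prod_{e\notin T}\sqrt{p_e(1-p_e)}$ is a normalization to be justified by the inner-product computation. So the only real content is showing $\langle \psi_S, \psi_T\rangle = \delta_{S,T}$.

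Next I would expand the inner product. Using $\pi(E) = \prod_{e\in E}p_e\prod_{e\notin E}(1-p_e)$ and the formula \cref{eq:eigenvector-defn} for $\phi_T$, the ratio $\phi_S(E)\phi_T(E)/\pi(E)$ should factor as a product over $e\in\mathcal{E}$ of a per-edge term depending only on the membership of $e$ in $S$, in $T$, and in $E$. The sum over $E\subseteq\mathcal{E}$ then factors as a product over $e\in\mathcal{E}$ of a two-term sum (the $e\in E$ case plus the $e\notin E$ case). For an edge $e$ in neither $S$ nor $T$, the sign factor $(-1)^{|\mathcal{E}\setminus(E\cup S)|}(-1)^{|\mathcal{E}\setminus(E\cup T)|}$ contributes, together with the $1/\pi(E)$ term, a per-edge factor whose two-term sum is $\frac{1}{p_e}+\frac{1}{1-p_e} = \frac{1}{p_e(1-p_e)}$; these are exactly cancelled by the normalizing constant $\prod_{e\notin S}\sqrt{p_e(1-p_e)}\prod_{e\notin T}\sqrt{p_e(1-p_e)}$ restricted to $e\notin S\cup T$. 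For an edge $e\in S\cap T$, the $\phi$ factors contribute $p_e$ or $(1-p_e)$ matching $\pi$, so after cancellation the two-term sum is $p_e + (1-p_e) = 1$. The crucial case is an edge $e$ in exactly one of $S,T$, say $e\in S\setminus T$: there one of the $\phi$ factors still carries the sign $(-1)^{[e\notin E]}$ while the other matches $\pi$, and the two-term sum becomes $p_e\cdot 1 + (1-p_e)\cdot(-1) = 2p_e - 1$... wait — this must vanish, so I would instead track the signs carefully: the surviving factor is $(-1)^{|\mathcal{E}\setminus(E\cup T)|}$ from $\phi_T$ while $\phi_S(E)/\sqrt{\pi(E)}$-type contributions for $e\in S$ are sign-free; pairing with the normalization $\sqrt{p_e(1-p_e)}$ (present since $e\notin T$) and the $1/\sqrt{\pi(E)}$ half, the per-edge two-term sum works out to $\sqrt{p_e(1-p_e)}\left(\sqrt{\tfrac{p_e}{1-p_e}} - \sqrt{\tfrac{1-p_e}{p_e}}\right)\cdot(\text{something})$, which I expect to telescope to $0$. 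So whenever $S\neq T$ there is at least one edge in the symmetric difference, contributing a zero factor, and the whole product vanishes; when $S = T$ every factor is $1$, giving $\|\psi_T\|^2 = 1$.

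The main obstacle is bookkeeping the signs $(-1)^{|\mathcal{E}\setminus(E\cup S)|}$ and $(-1)^{|\mathcal{E}\setminus(E\cup T)|}$ correctly when factoring over edges: one must rewrite $|\mathcal{E}\setminus(E\cup S)| = \sum_{e\in\mathcal{E}}[\,e\notin E \text{ and } e\notin S\,]$ so that the sign becomes a product $\prod_e (-1)^{[e\notin E][e\notin S]}$, and similarly for $T$, and then combine these per-edge signs with the per-edge $p_e$-factors coming from $\phi_S$, $\phi_T$, $1/\pi(E)$, and the normalization before summing over the two values of $[e\in E]$. Once the factorization is set up, each of the three cases ($e\in S\cap T$, $e\notin S\cup T$, $e\in S\triangle T$) is a one-line check, and the conclusion $\langle\psi_S,\psi_T\rangle = \prod_e(\text{per-edge sum}) = \delta_{S,T}$ follows. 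Finally, since $\mathsf{Q}$ is symmetric and real, distinct eigenvalues already force orthogonality, but the explicit computation above is needed precisely because many $\psi_T$ share the eigenvalue $\frac{k}{m}$; the computation shows these coincident-eigenvalue eigenvectors are mutually orthogonal, completing the proof that $\{\psi_T\}_{T\subseteq\mathcal{E}}$ is an orthonormal eigenbasis.
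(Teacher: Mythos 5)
Your overall strategy is exactly the paper's: observe that $\psi_T=\phi_T\Pi^{-1/2}$ up to a constant, so the eigenvector property is immediate from \cref{thm:eigenvectors}, and then establish orthonormality by factoring $\sum_E \phi_S(E)\phi_T(E)/\pi(E)$ into a product over edges of two-term sums, with the cases $e\in S\cap T$, $e\notin S\cup T$, and $e\in S\triangle T$ handled separately. The paper does precisely this by writing $\phi_T(E)=\prod_e g_{e,T}(\mathbf{1}_E(e))$ with $g_{e,T}(1)=p_e$, $g_{e,T}(0)=1-p_e$ for $e\in T$ and $g_{e,T}(1)=1$, $g_{e,T}(0)=-1$ for $e\notin T$. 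Your treatment of the first two cases, and your remark that the explicit computation is needed because of the repeated eigenvalues, all match.

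However, the decisive case $e\in S\triangle T$ is not actually verified in your write-up: both of your attempted evaluations come out to $2p_e-1$, not $0$. Your first expression $p_e\cdot 1+(1-p_e)\cdot(-1)$ omits the per-edge weight coming from $1/\pi(E)$, and your second expression $\sqrt{p_e(1-p_e)}\bigl(\sqrt{p_e/(1-p_e)}-\sqrt{(1-p_e)/p_e}\bigr)$ simplifies to $p_e-(1-p_e)=2p_e-1$ as well, so the assertion that it ``telescopes to $0$'' is unsupported. The correct computation, with the weight $p_e^{x}(1-p_e)^{1-x}$ in the denominator, is
\begin{align*}
    \sum_{x=0}^{1}\frac{g_{e,S}(x)\,g_{e,T}(x)}{p_e^{x}(1-p_e)^{1-x}}
    = \frac{p_e\cdot 1}{p_e}+\frac{(1-p_e)\cdot(-1)}{1-p_e} = 1-1 = 0
\end{align*}
for $e\in S\setminus T$, and the extra normalization factor $\sqrt{p_e(1-p_e)}$ is irrelevant since the factor is already zero. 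With this one line inserted, your argument is complete and coincides with the paper's proof; as written, the orthogonality claim for $S\neq T$ rests on an expression that does not vanish.
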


\begin{proof}
    Note that, up to the scaling factor $\prod_{e\notin T} \sqrt{p_e(1-p_e)}$ which is uniform in $E\subseteq{\mathcal E}$, it holds $\psi_T = \phi_T \Pi^{-1/2}$. Thus $\psi_T \mathsf{Q}= \phi_T\mathsf{P}\Pi^{-1/2} = \lambda_T \psi_T$, and it follows that $\{\psi_T\}_{T\subseteq \mathcal{E}}$ forms a complete collection of eigenvectors of $\mathcal{Q}$. 

    The symmetry of $\mathsf{Q}$ guarantees orthogonality of $\psi_T, \psi_{T'}$ provided $\lambda_T\neq\lambda_{T'}$; however the spectrum of $\mathsf{P}$ will in general contain many repeated eigenvalues (see~\cref{rmk:multiplicities}) and thus we check orthogonality in general as follows. For a given subset $E\subseteq\mathcal{E}$, let $\mathbf{1}_E\in\mathbb{R}^{\mathcal{E}}$ denote the indicator vector of $E$. Then for any $T\subseteq\mathcal{E}$ fixed, the vector $\phi_T$ may be recast as a product over edges in the form
        \begin{align*}
            \phi_T(E)=\prod_{e\in\mathcal E}g_{e,T}(\mathbf{1}_E(e)),\quad E\subseteq\mathcal{E},
        \end{align*}
    where $g_{e,T}:\{0, 1\}\rightarrow\mathbb{R}$ is given by
        \begin{align*}
            g_{e,T}(x)=
            \begin{cases}
            p_e,&x=1\text{ and }e\in T,\\
            1-p_e,&x=0\text{ and }e\in T,\\
            1,&x=1\text{ and }e\notin T,\\
            -1,&x=0\text{ and }e\notin T.
            \end{cases}.
        \end{align*}
    Hence, for $S, T\subseteq\mathcal{E}$ fixed, it holds
        \begin{align*}
            \langle\psi_T,\psi_S\rangle
            &=\sum_{E\subseteq\mathcal E}\psi_T(E)\,\psi_S(E)\\
            &=\prod_{e\notin T}\sqrt{p_e(1-p_e)}
            \;\prod_{e\notin S}\sqrt{p_e(1-p_e)}\\
            &\quad\times \sum_{X\in\{0,1\}^{\mathcal E}}
            \frac{\prod_{e}g_{e,T}(X(e))\,g_{e,S}(X(e))}{\pi(E)},
        \end{align*}
    where we identify subsets with their indicator vectors. Focusing on the last term for a moment, we have the factorization:
        \begin{align*}
            \sum_{X\in\{0,1\}^{\mathcal E}}&\frac{\prod_{e\in\mathcal{E}}g_{e,T}(X(e))\,g_{e,S}(X(e))}{\pi(E)}\\
            &=\sum_{X\in\{0,1\}^{\mathcal E}} \prod_{e\in\mathcal{E}} \frac{g_{e,T}(X(e)) g_{e,S}(X(e))}{p_e^{X(e)}(1-p_e)^{1-X(e)}}\\
            &=\prod_{e\in\mathcal E}
            \Bigl(\sum_{x=0}^1\frac{g_{e,T}(x)\,g_{e,S}(x)}{p_e^x(1-p_e)^{1-x}}\Bigr).
        \end{align*}
    This can be seen by expanding $\sum_{X\in\{0,1\}^{\mathcal E}}(\cdot)$ into $|\mathcal{E}|$ nested sums and then rearranging. If $T\neq S$, choose an edge $e^\ast$ on which they differ; assuming without loss of generality that $e^\ast\in T$ and $e^\ast\notin S$, one has
        \begin{align*}
            \sum_{x=0}^1\frac{g_{e^\ast,T}(x)\,g_{e^\ast,S}(x)}{p_{e^\ast}^x(1-p_{e^\ast})^{1-x}}
            &=-\frac{1-p_e}{1-p_e} + \frac{p_e}{p_e} = 0.
        \end{align*}
    so $\langle\psi_T,\psi_S\rangle=0$. On the other hand, if $T=S$, then for each $e$ one checks
        \begin{align*}
        \sum_{x=0}^1\frac{g_{e,T}(x)^2}{p_e^x(1-p_e)^{1-x}}
        &=
        \begin{cases}
        \frac{(1-p_e)^2}{1-p_e}+\frac{p_e^2}{p_e}=1,&e\in T,\\
        \frac{1}{1-p_e}+\frac{1}{p_e}=\frac{1}{p_e(1-p_e)},&e\notin T.
        \end{cases}
        \end{align*}
    Therefore
        \begin{align*}
        \langle\psi_T,\psi_T\rangle
        &=\prod_{e\notin T}p_e(1-p_e)
        \;\prod_{e\notin T}\frac1{p_e(1-p_e)}
        =1,
        \end{align*}
    which completes the proof.
\end{proof}

Next we highlight an application to spectral representations of commute times for the simple edit process with edge probabilities $\mathbf{p} = (p_e)_{e\in\mathcal{E}}$. To this end, consider a finite state space $\mathcal X=\{1,\dots,N\}$ and a Markov chain $(X_t)_{t\geq 0}$ on $\mathcal X$. For $x\in\mathcal{X}$, we define the (possibly infinite) \emph{hitting time}
    \begin{align*}
        \tau_x &= \inf \{t\geq 0 : X_t = x\}.
    \end{align*}
The \emph{expected hitting times} and \emph{expected commute times} are given, respectively, by
    \begin{align*}
        H(x, y) &= \mathbb{E}\left[\tau_y \mid X_0 = x\right],\\
        C(x, y) &= H(x, y) + H(y, x),
    \end{align*}
for each $x,y\in\mathcal{X}$. We remark that in the case of our evolving graph processes, the commute time can be considered a notion of distance between subgraphs the host graph, and is related to the effective resistance distance on the state graph. For completeness we recall the following lemma, which is well known and stated without proof.

\begin{lemma}\label{lem:commute}
    Let $\mathcal X=\{1,\dots,N\}$ be a finite state space and let $P$ be the transition probability matrix of an ergodic reversible Markov chain $(X_t)_{t\geq 0}$ on $\mathcal X$, and let $\pi$ be its stationary distribution. Set $\Pi=\operatorname{diag}(\pi(1),\dots,\pi(N))$ and $Q=\Pi^{1/2} P \Pi^{-1/2}$. Assume $Q$ admits the the row vector eigendecomposition 
        \begin{align*}
            Q = \Psi^\top \Lambda \Psi, \quad 
            \Psi = \begin{bmatrix}
                \psi_1 \\
                \psi_2\\
                \vdots\\
                \psi_N
            \end{bmatrix},\quad
            \psi_1=\Pi^{1/2}\mathbf 1
        \end{align*}
    where $1=\lambda_1>\lambda_2\ge\dots\ge\lambda_N>-1$, and define the normalized vectors $\varphi_k=\Pi^{-1/2}\psi_k$ so $\varphi_k(i)=\psi_k(i)/\sqrt{\pi(i)}$. Then for $x, y\in\mathcal{X}$, fixed, the hitting time $H(x, y)$ admits the following spectral representation:
        \begin{align}\label{eq:hitting-time-spectral}
            H(x, y) &= \sum_{k=2}^N \frac{1}{1-\lambda_k} \varphi_k(y)(\varphi_k(y) - \varphi_k(x)).
        \end{align}
    Similarly, the commute time admits the spectral representation
        \begin{align}\label{eq:commute-time-spectral}
            C(x, y) &= \sum_{k=2}^N \frac{1}{1-\lambda_k} (\varphi_k(x) - \varphi_k(y))^2.
        \end{align}
\end{lemma}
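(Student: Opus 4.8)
\textbf{Proof plan for~\cref{lem:commute}.} The plan is to derive~\cref{eq:hitting-time-spectral} directly from the spectral decomposition of the symmetrized transition matrix $Q$ and the standard first-step renewal identity relating hitting times to the fundamental matrix of the chain; then~\cref{eq:commute-time-spectral} will follow by symmetrizing in $x$ and $y$. First I would fix the target state $y$ and recall the classical identity expressing $H(x,y)$ in terms of the fundamental matrix $Z=\sum_{t\ge 0}(P^t-\mathbf 1\pi^\top)$, namely $H(x,y)=\big(Z_{yy}-Z_{xy}\big)/\pi(y)$. This identity can be obtained from the renewal decomposition: for a reversible ergodic chain, $P^t(x,y)-\pi(y)$ summed over $t$ captures the deviation from equilibrium, and relating $\sum_t(P^t(x,y)-\pi(y))$ to passage times through $y$ gives exactly the stated formula (this is where the reversibility is used only in identifying the two representations; the formula itself holds generally, so I would just cite it as well known).

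Next I would expand $Z$ spectrally. Since $Q=\Psi^\top\Lambda\Psi$ with the $\psi_k$ orthonormal, we have $P^t=\Pi^{-1/2}Q^t\Pi^{1/2}=\sum_{k=1}^N\lambda_k^t\,\Pi^{-1/2}\psi_k\psi_k^\top\Pi^{1/2}$. Writing this entrywise with $\varphi_k=\Pi^{-1/2}\psi_k$ gives $P^t(x,y)=\pi(y)\sum_{k=1}^N\lambda_k^t\,\varphi_k(x)\varphi_k(y)$, where the $k=1$ term is exactly $\pi(y)$ because $\varphi_1\equiv\mathbf 1$. Summing over $t\ge 0$ and using $|\lambda_k|<1$ for $k\ge 2$ yields $Z_{xy}=\pi(y)\sum_{k=2}^N\frac{1}{1-\lambda_k}\varphi_k(x)\varphi_k(y)$. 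Substituting into $H(x,y)=(Z_{yy}-Z_{xy})/\pi(y)$ produces
\begin{align*}
    H(x,y)=\sum_{k=2}^N\frac{1}{1-\lambda_k}\big(\varphi_k(y)^2-\varphi_k(x)\varphi_k(y)\big)=\sum_{k=2}^N\frac{1}{1-\lambda_k}\varphi_k(y)\big(\varphi_k(y)-\varphi_k(x)\big),
\end{align*}
which is~\cref{eq:hitting-time-spectral}. Adding $H(x,y)+H(y,x)$ and collecting terms gives $\sum_{k\ge 2}\frac{1}{1-\lambda_k}\big(\varphi_k(y)^2-\varphi_k(x)\varphi_k(y)+\varphi_k(x)^2-\varphi_k(y)\varphi_k(x)\big)=\sum_{k\ge 2}\frac{1}{1-\lambda_k}(\varphi_k(x)-\varphi_k(y))^2$, which is~\cref{eq:commute-time-spectral}.

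The only genuine subtlety — and the step I would be most careful about — is the first-step/renewal identity $H(x,y)=(Z_{yy}-Z_{xy})/\pi(y)$, since its cleanest proof goes through the observation that $g(x):=H(x,y)$ solves $g(y)=0$ and $(I-P)g=\mathbf 1-\frac{1}{\pi(y)}\mathbf e_y$ off the diagonal, and one must check that $Z$ inverts $I-P$ appropriately on the orthogonal complement of constants. Since the lemma is stated as well known and without proof in the excerpt, I would present the spectral computation of $Z$ in full and invoke the renewal identity as a citation (e.g., to a standard Markov chain reference), noting only that reversibility guarantees $Q$ is symmetric so that the eigendecomposition with real $\lambda_k$ and orthonormal $\psi_k$ exists, which is exactly the hypothesis we are handed. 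Everything else is bookkeeping with the geometric series and the identity $\varphi_1\equiv\mathbf 1$.
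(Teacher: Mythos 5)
Your argument is correct: the fundamental-matrix identity $H(x,y)=(Z_{yy}-Z_{xy})/\pi(y)$ combined with the spectral expansion $P^t(x,y)=\pi(y)\sum_k\lambda_k^t\varphi_k(x)\varphi_k(y)$ and the geometric series is exactly the standard derivation, and your bookkeeping (the $k=1$ term cancelling against $\pi(y)$, the symmetrization for the commute time) checks out. The paper states~\cref{lem:commute} explicitly as well known and gives no proof at all, so there is nothing to compare against; your write-up supplies the canonical argument the authors are implicitly invoking.
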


The following theorem gives a closed-form spectral representation of the commute time between subgraphs in the setting of the graph edit Markov chain.

\begin{theorem}\label{thm:commute-time-spectral-gemc}
    Let $\mathcal{H} = (\mathcal{V},\mathcal{E})$ be a fixed host graph and let $\mathbf{p}=(p_e)_{e\in\mathcal{E}}$ be fixed with $0<p_e<1$ for each $e\in\mathcal{E}$. Let $(G_t)_{t\geq 0}$ be obtained from the simple edit process with edge probabilities $\mathbf{p} = (p_e)_{e\in\mathcal{E}}$, let $\pi$ denote the corresponding stationary distribution, and let $\{\psi_T\}_{T\subseteq\mathcal{E}}$ denote the eigenvectors of the symmetrized transition probability matrix as in~\cref{thm:orthonormality}. Letting $E, F\subseteq\mathcal{E}$ be fixed, the commute time $C(E, F)$ admits the following spectral representation:
        \begin{align}\label{eq:commute-time-gemc}
            C(E,F)=\sum_{\substack{T\cap (E\triangle F) \neq \varnothing\\ T\neq\mathcal{E}}}\frac{m}{m-|T|}\left(\frac{\psi_T(E)}{\sqrt{\pi(E)}}-\frac{\psi_T(F)}{\sqrt{\pi(F)}}\right)^2,
        \end{align}
    where $E\triangle F$ is the symmetric difference of $E, F$.
\end{theorem}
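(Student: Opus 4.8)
The plan is to combine the general spectral formula for commute times from \cref{lem:commute} with the explicit eigendata for the simple edit process developed in \cref{thm:eigenvalues-state-graph}, \cref{thm:eigenvectors}, and \cref{thm:orthonormality}. First I would verify that \cref{lem:commute} applies: the simple edit process is ergodic and reversible by \cref{lem:ergodicity-of-gemc}, its symmetrized transition matrix $\mathsf{Q} = \Pi^{1/2}\mathsf{P}\Pi^{-1/2}$ is symmetric, and by \cref{thm:orthonormality} the normalized vectors $\{\psi_T\}_{T\subseteq\mathcal{E}}$ form a complete orthonormal system of eigenvectors with eigenvalues $\lambda_T = |T|/m$; moreover the top eigenvalue $\lambda_{\mathcal{E}} = 1$ corresponds to $T = \mathcal{E}$ and $\psi_{\mathcal{E}}$ is (up to normalization) $\Pi^{1/2}\mathbf{1}$, as required by the hypothesis of \cref{lem:commute}. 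One technical caveat to address is that \cref{lem:commute} is stated with strictly decreasing eigenvalues and no repeats, whereas here eigenvalues repeat with high multiplicity; but the derivation of the spectral formula only requires a complete orthonormal eigenbasis, not simplicity of the spectrum, so the formula \cref{eq:commute-time-spectral} carries over verbatim with the sum reindexed over $T \subsetneq \mathcal{E}$ rather than over $k = 2,\dots,N$.

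Granting that, \cref{eq:commute-time-spectral} gives
\begin{align*}
    C(E, F) = \sum_{T\subsetneq\mathcal{E}} \frac{1}{1 - |T|/m}\left(\frac{\psi_T(E)}{\sqrt{\pi(E)}} - \frac{\psi_T(F)}{\sqrt{\pi(F)}}\right)^2 = \sum_{T\subsetneq\mathcal{E}} \frac{m}{m - |T|}\left(\frac{\psi_T(E)}{\sqrt{\pi(E)}} - \frac{\psi_T(F)}{\sqrt{\pi(F)}}\right)^2,
\end{align*}
which already matches \cref{eq:commute-time-gemc} except that the claimed sum is restricted to those $T$ with $T \cap (E \triangle F) \neq \varnothing$. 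So the remaining task is to show that every term with $T \cap (E\triangle F) = \varnothing$ vanishes, i.e. that $\psi_T(E)/\sqrt{\pi(E)} = \psi_T(F)/\sqrt{\pi(F)}$ whenever $T$ is disjoint from $E \triangle F$. From the definition in \cref{thm:orthonormality}, $\psi_T(G)/\sqrt{\pi(G)} = \phi_T(G)\,\big(\prod_{e\notin T}\sqrt{p_e(1-p_e)}\big)\big/\pi(G)$, and the scalar prefactor does not depend on $G$, so it suffices to prove $\phi_T(E)/\pi(E) = \phi_T(F)/\pi(F)$. Using the formula \cref{eq:eigenvector-defn} for $\phi_T$ together with \cref{eq:formula-for-gemc-stationary} for $\pi$, a direct computation should show that the ratio $\phi_T(G)/\pi(G)$ depends on $G$ only through $G \cap T$ — indeed the factors $\prod_{e\in T\cap G}p_e\prod_{e\in T\setminus G}(1-p_e)$ in $\phi_T(G)$ cancel against the corresponding $T$-indexed factors of $\pi(G)$, and the sign $(-1)^{|\mathcal{E}\setminus(G\cup T)|}$ combined with the remaining $\pi$-factors over $e\notin T$ likewise collapses to something depending only on $G\cap T$ (roughly $\pm\prod_{e\notin T, e\notin G}(1-p_e)^{-1}\prod_{e\notin T, e\in G}p_e^{-1}$, which is a function of $G\cap(\mathcal{E}\setminus T)$... here I need to be careful and isolate the genuinely $G$-dependent part). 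The cleanest way to organize this is via the product-over-edges factorization $\phi_T(G) = \prod_{e\in\mathcal{E}} g_{e,T}(\mathbf{1}_G(e))$ already introduced in the proof of \cref{thm:orthonormality}: then $\phi_T(G)/\pi(G) = \prod_e h_{e,T}(\mathbf{1}_G(e))$ for suitable per-edge functions $h_{e,T}$, and one checks that for $e \notin T$ the function $h_{e,T}$ is constant on $\{0,1\}$, so the product depends only on the coordinates $\mathbf{1}_G(e)$ with $e\in T$, i.e. only on $G\cap T$. Since $T\cap(E\triangle F) = \varnothing$ forces $E\cap T = F\cap T$, we get $\phi_T(E)/\pi(E) = \phi_T(F)/\pi(F)$, killing that term.

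Finally I would assemble the pieces: drop the vanishing terms, note that among the surviving indices $T$ we still exclude $T = \mathcal{E}$ (it is the stationary eigenvector and contributes nothing to the commute time anyway, matching the $T\neq\mathcal{E}$ condition in \cref{eq:commute-time-gemc}), and record that the constraint "$T\cap(E\triangle F)\neq\varnothing$" is exactly the complement of the vanishing set. This yields precisely \cref{eq:commute-time-gemc}. I expect the only real obstacle to be the bookkeeping in the second paragraph — correctly showing that $\phi_T(G)/\pi(G)$ depends on $G$ only through $G\cap T$, keeping the signs and the $e\notin T$ factors straight; using the $g_{e,T}$ factorization already established makes this essentially mechanical rather than delicate. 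Everything else is a direct invocation of results already proved.
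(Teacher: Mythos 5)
Your overall strategy is identical to the paper's: invoke \cref{lem:commute} with the orthonormal eigenbasis from \cref{thm:orthonormality} to obtain the sum over all $T\subsetneq\mathcal{E}$ with weights $\frac{m}{m-|T|}$, then argue that the terms with $T\cap(E\triangle F)=\varnothing$ drop out because $\phi_T(E)/\pi(E)=\phi_T(F)/\pi(F)$. The first paragraph is fine, including the observation that \cref{lem:commute} really only needs a complete orthonormal eigenbasis rather than a simple spectrum. The gap is in your second paragraph: the claim that the per-edge ratio $h_{e,T}(x)=g_{e,T}(x)/\bigl(p_e^x(1-p_e)^{1-x}\bigr)$ is constant for $e\notin T$ is backwards. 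From the definition of $g_{e,T}$ one gets $h_{e,T}(1)=h_{e,T}(0)=1$ when $e\in T$ (those factors cancel against $\pi$), whereas for $e\notin T$ one gets $h_{e,T}(1)=p_e^{-1}$ and $h_{e,T}(0)=-(1-p_e)^{-1}$, which are never equal. Hence $\phi_T(G)/\pi(G)=\prod_{e\notin T}h_{e,T}(\mathbf{1}_G(e))$ depends on $G$ only through $G\setminus T$, not through $G\cap T$, and the terms that provably vanish are those with $E\setminus T=F\setminus T$, i.e.\ $E\triangle F\subseteq T$ --- not those with $T\cap(E\triangle F)=\varnothing$. Your closing inference ``$T\cap(E\triangle F)=\varnothing$ forces $E\cap T=F\cap T$, hence the ratios agree'' therefore does not go through.

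This is not a patchable bookkeeping slip, because the stated index set is itself incorrect: in \cref{ex:path-on-two-edges} take $E=\{a,b\}$ and $F=\{a\}$, so $E\triangle F=\{b\}$ and the only admissible index in \cref{eq:commute-time-gemc} is $T=\{b\}$; but $\phi_{\{b\}}(E)/\pi(E)=\phi_{\{b\}}(F)/\pi(F)=1/p$, so the right-hand side of \cref{eq:commute-time-gemc} is $0$, while the paper's own commute-time matrix (and the unrestricted sum from \cref{lem:commute}) gives $C(E,F)=\tfrac{1+p}{p^2(1-p)}$. You should know that the paper's proof commits the same reversal: it verifies the per-edge equality only for those $e\notin T$ that also avoid $E\triangle F$, and then asserts the conclusion under the hypothesis $T\subseteq(E\triangle F)^c$, which does not force every $e\notin T$ to avoid $E\triangle F$. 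The correct restriction is over $T$ with $E\triangle F\not\subseteq T$ and $T\neq\mathcal{E}$ (which is consistent with the commute-time matrix displayed in the example). So your proposal faithfully reproduces the paper's argument, including its error; to repair it, redo the $h_{e,T}$ computation as above and replace the vanishing condition accordingly.
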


\begin{proof}
    By~\cref{thm:orthonormality} and~\cref{lem:commute}, it follows that
        \begin{align}\label{eq:spectral-commute-time-1}
            C(E,F)=\sum_{\substack{T\subseteq \mathcal{E}\\ T\neq\mathcal{E}}}\frac{m}{m-|T|}\left(\frac{\psi_T(E)}{\sqrt{\pi(E)}}-\frac{\psi_T(F)}{\sqrt{\pi(F)}}\right)^2.
        \end{align}
    All that remains to be shown is that the sum in~\cref{eq:spectral-commute-time-1} can be taken over only the subsets that have nontrivial intersection with $E\triangle F$. It is enough to show that if $T\cap (E\triangle F) = \varnothing$, then 
        \begin{align}\label{eq:equality}
            \frac{\phi_T(E)}{\pi(E)}=\frac{\phi_T(F)}{{\pi(F)}},
        \end{align}
    where $\phi_T$ is as in~\cref{thm:eigenvectors}. We compute, again using~\cref{thm:eigenvectors}, 
        \begin{align*}
            \frac{\phi_T(E)}{\pi(E)} &= \frac{(-1)^{|\mathcal{E}\setminus(E\cup T)|}\prod_{e\in T\cap E}p_e \prod_{e\in T\setminus E}(1-p_e)}{\prod_{e\in E}p_e \prod_{e\notin E}(1-p_e)}
        \end{align*}
        \begin{align}\label{eq:phi-pi}
            &= {\prod_{e\in E\setminus T}p_e^{-1} \prod_{e\notin (E\cup T)}(p_e-1)^{-1}}\notag\\
            &= \prod_{e\notin T} (p_e^{-1} \mathbf{1}_{\{e\in E\}} + (p_e-1)^{-1} \mathbf{1}_{\{e\notin E\}} )
        \end{align}
    and similarly for $\frac{\phi_T(F)}{\pi(F)}$. Now suppose $e\notin T$ is fixed such that $e\notin E\triangle F$. Then $e\in E\cap F$ or $e\notin E\cup F$. In the former case,
        \begin{align*}
            &p_e^{-1} \mathbf{1}_{\{e\in E\}} + (p_e-1)^{-1} \mathbf{1}_{\{e\notin E\}}\\
            &\qquad= p_e^{-1} \mathbf{1}_{\{e\in F\}} + (p_e-1)^{-1} \mathbf{1}_{\{e\notin F\}}= p_e^{-1},
        \end{align*}
    and similarly in the latter case. Therefore if $T\subseteq (E\triangle F)^c $ each term in the product which appears in~\cref{eq:phi-pi} coincides and~\cref{eq:equality} holds, so that~\cref{eq:commute-time-gemc} is proved.
\end{proof}

We now describe a small example which shows~\cref{thm:eigenvectors} and~\cref{thm:commute-time-spectral-gemc} in action.

\begin{figure}
    \begin{center}
    \input{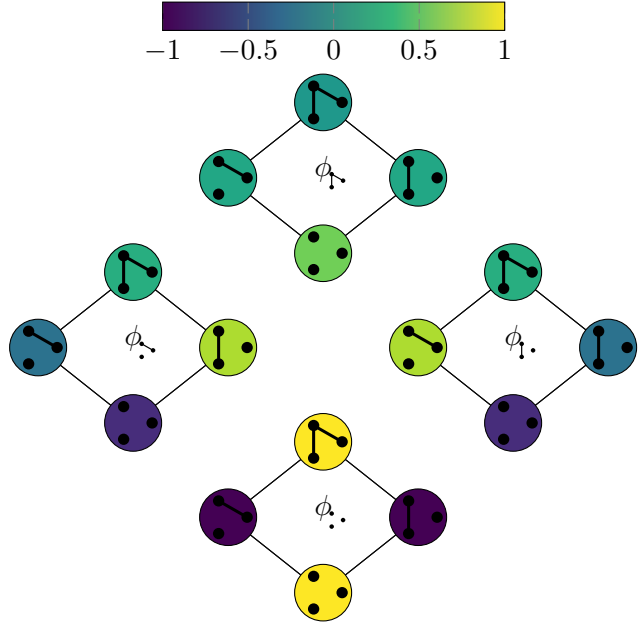}
    \caption{A drawing of the state graph and the corresponding eigenvectors $\phi_T$ of the transition probability matrix for the simple edit process with edge probabilities $\mathbf{p} = (p_e)_{e\in\mathcal{E}}$ on a host graph $\mathcal{H}$ consisting of a path with two edges. We use $p=0.25$ for concreteness. The eigenvectors $\phi_T$ are obtained using the formula in~\cref{thm:eigenvectors}, and each copy of the state graph $\mathscr{S}(\mathcal{H})$ is indexed by a choice of $T$, rendered visually as a subscript of the symbol $\phi$ in each copy.}\label{fig:baby-monster}
    \end{center}
\end{figure}

\begin{example}\normalfont\label{ex:path-on-two-edges}
    In this example we obtain the eigenvectors of the transition probability matrix $\mathsf{P}$ associated to a simple edit process on a small host graph. Let $\mathcal{H} = (\{1, 2, 3\}, \{\{1, 2\}, \{2, 3\}\})$ consist of a path graph on three nodes and two edges, illustrated below. We symbolically denote the two edges $a, b$, respectively, for brevity in the discussion to follow.

    \begin{center}
        \begin{tikzpicture}[scale=0.5]
            \node at (-2, -2)[circle,fill,color=black,inner sep=1.8pt]{};
            \node at (0, 0)[circle,fill,color=black,inner sep=1.8pt]{};
            \node at (2, -2)[circle,fill,color=black,inner sep=1.8pt]{};
            \draw[thick] (-2, -2 )-- (0, 0);
            \draw[thick] (0, 0) -- (2, -2);
            \node at (-1.25, 0) {$a$};
            \node at (1.25, 0) {$b$};
            \node at (-2, -2.5) {$1$};
            \node at (0, -0.5) {$2$};
            \node at (2, -2.5) {$3$};
        \end{tikzpicture}	
    \end{center}

    The edit semigroup is generated by the set $\{{a^+}, {a^-}, {b^+}, {b^-}\}$, and we organize in the table below based on their reduced edit enumerations.

    \begin{center}
        \begin{tabular}{c|c}
            $\supp{x} $ & $x$ \\\hline\hline
            $\varnothing$ & $\widehat{0}$\\
            $\{a\}$ & ${a^+}, {a^-}$\\
            $\{b\}$ & ${b^+}, {b^-}$\\
            $\{a, b\}$ & ${a^+}{b^+}, {a^+}{b^-}, {a^-}{b^+}, {a^-}{b^-}$
        \end{tabular}
    \end{center}

    The corresponding lattice is the Boolean algebra on $2^{\{a, b\}}$,	which has exactly four elements (themselves shown in the leftmost column of the table above). From~\cref{lem:chambers-of-s} we know that the chambers ${\mathcal{C}}$ of $\mathcal{S}$ are exactly the elements which have support of maximal cardinality, which in this case is two. Fixing $p\in(0, 1)$, the transition probability matrix $\mathsf{P}$ for the random walk on the chambers of $\mathcal{S}$ is given by:
        \begin{align*}
            \mathsf{P} &= \begin{bmatrix}
                p & \frac{1-p}{2} & \frac{1-p}{2} & 0\\
                \frac{p}{2} & \frac{1}{2} & 0 & \frac{1-p}{2} \\
                \frac{p}{2}& 0 & \frac{1}{2} & \frac{1-p}{2} \\
                0 &\frac{p}{2} & \frac{p}{2}&1-p \\
            \end{bmatrix}
        \end{align*}
    where the rows and columns are indexed by $\{{a^+}{b^+},{a^+}{b^-}, {a^-}{b^+}, {a^-}{b^-}\}$. By~\cref{lem:ergodicity-of-gemc} we know that the stationary distribution $\pi$ is
        \begin{align*}
            \pi = \begin{bmatrix}
                p^2 & p(1-p) & p(1-p) & (1-p)^2
            \end{bmatrix},
        \end{align*}
    and that $\lambda_{\{a, b\}} = 1$. From~\cref{thm:eigenvectors}, we have
        \begin{align*}
            \phi_{\{b\}} &= \begin{bmatrix}
                p & (1-p) & -p & -(1-p)
            \end{bmatrix},\hspace{.5cm}\lambda_{\{b\}} = \frac{1}{2},\\
            \phi_{\{a\}} &= \begin{bmatrix}
                p & -p & (1-p) & -(1-p)
            \end{bmatrix},\hspace{.5cm}\lambda_{\{a\}} = \frac{1}{2},\\
            \phi_{\varnothing} &= \begin{bmatrix}
                1 & -1 & -1 & 1
            \end{bmatrix},\hspace{.5cm}\lambda_{\varnothing} = 0.\\
        \end{align*}
    Thus we have the diagonalization $\mathsf{P} = U^{-1} \Lambda U$ of $P$, where
        \begin{align}\label{eq:u}
            U = \begin{bmatrix}
                p^2 & p(1-p) & p(1-p) & (1-p)^2 \\
                p & (1-p) & -p & -(1-p) \\ 
                p & -p & (1-p) & -(1-p) \\ 
                1 & -1 & -1 & 1
            \end{bmatrix},
        \end{align}
    and $\Lambda$ is the diagonal matrix of eigenvalues given by
        \begin{align}\label{eq:lambda}
            \Lambda = \begin{bmatrix}
                1 & & & \\
                & \frac{1}{2} & &\\
                & & \frac{1}{2} & \\
                & & & 0
            \end{bmatrix}.
        \end{align}
    Finally, by applying~\cref{thm:commute-time-spectral-gemc} together with the eigenvalues and eigenvectors from~\cref{eq:u} and~\cref{eq:lambda}, the matrix of commute times $C\in\mathbb{R}^{4\times 4}$ with entries $C(E, F)$ for each $E, F\subseteq\mathcal{E}$ is given by
        {\begin{align*}
            C =
                \begin{bmatrix}
                0 & \dfrac{1+p}{p^{2}(1-p)} & \dfrac{1+p}{p^{2}(1-p)} & \dfrac{1}{p^{2}(1-p)^{2}}\\[8pt]
                \dfrac{1+p}{p^{2}(1-p)} & 0 & \dfrac{4}{p(1-p)} & \dfrac{2-p}{p(1-p)^{2}}\\[8pt]
                \dfrac{1+p}{p^{2}(1-p)} & \dfrac{4}{p(1-p)} & 0 & \dfrac{2-p}{p(1-p)^{2}}\\[8pt]
                \dfrac{1}{p^{2}(1-p)^{2}} & \dfrac{2-p}{p(1-p)^{2}} & \dfrac{2-p}{p(1-p)^{2}} & 0
                \end{bmatrix}.
        \end{align*}}
\end{example}

\section{Discussion}\label{sec:discussion}

In this paper we have investigated several edit-based graph evolving processes which satisfy the memoryless conditions. By using methods from semigroup spectral theory, we give a detailed spectral analysis of the transition probability matrices associated to both simple and compound edit processes. We anticipate that these tools can be useful for problems arising from a wide range of evolving processes. Further questions can be asked about extensions and modifications, as well as comparisons of memoryless processes with others.

\end{document}